\documentclass[10pt,a4paper]{amsart}
\usepackage{a4wide,amsmath,amsfonts,amssymb,amsthm,mathrsfs,setspace,bm,amsxtra,mathtools,wasysym,textcomp} 
\mathtoolsset{showonlyrefs}

\usepackage[english]{babel}
\usepackage[all]{xy}
 \usepackage{verbatim}
\usepackage{todonotes}

\parskip=3pt\voffset=12mm
\newtheorem{thm}{Theorem}[section]
\newtheorem{prop}[thm]{Proposition}
\newtheorem{lemma}[thm]{Lemma}
\newtheorem{defin}[thm]{Definition}
\newtheorem{cor}[thm]{Corollary}

\numberwithin{equation}{section}
\theoremstyle{definition}
\newtheorem{conjecture}{Conjecture}

\newtheorem{exa}[thm]{Example} 
\newenvironment{ex}{\begin{exa}}{\parbox{2mm}{\hfill}\hfill $\triangle$\end{exa}}
\newtheorem{rema}[thm]{Remark} 
\newenvironment{rem}{\begin{rema}}{\parbox{2mm}{\hfill}\hfill $\triangle$\end{rema}}

\newcommand{\rk}{\operatorname{rk}}

\newcommand{\End}{\operatorname{End}}

\newcommand{\im}{\operatorname{Im}}
\newcommand{\Pic}{\operatorname{Pic}}
\newcommand{\Ol}{\mathcal{O}}

\newcommand{\id}{\operatorname{id}}

\newcommand{\Z}{\mathbb{Z}}
\newcommand{\Q}{\mathbb{Q}}

\newcommand{\Com}{\mathbb{C}}

\newcommand{\Ea}{\mathfrak{E}}


\newcommand{\gr}{\operatorname{gr}}
\newcommand{\om}[1]{\Omega_{#1}^1}

\begin{document}

\rightline{SISSA Preprint 40/2017/MATE}

\bigskip\bigskip\bigskip

 \begin{center}
{\Large\bf  SEMISTABLE HIGGS BUNDLES ON CALABI-YAU MANIFOLDS} \\[30pt]

{\sc U. Bruzzo,$^{\S\sharp\flat}$ V. Lanza$^\ddag$ and A. Lo Giudice$^\P$}\\[15pt]
$^\S$ SISSA (Scuola Internazionale Superiore di Studi Avanzati), \\ Via Bonomea 265, 34136 Trieste, Italy  \\[3pt]
$^\sharp$  Istituto Nazionale di Fisica Nucleare, Sezione di Trieste\\[3pt]
$^\flat$ Arnold-Regge Center,  via P.~Giuria 1,  10125 Torino, Italy \\[3pt]
 $\ddag$ IMECC - UNICAMP, Departamento de Matem\'atica, \\ 
 Rua S\'ergio Buarque de Holanda, 651, 13083-970 Campinas-SP, Brazil   \\[3pt] 
$^\P$ Ripa di Porta Ticinese 93, 20021 Milano, Italy
 \end{center} 

 \bigskip  \bigskip {\small
\begin{quote}{\sc Abstract.}  We provide a partial classification of semistable Higgs bundles over a simply connected Calabi-Yau manifolds. Applications to a conjecture about a special class of semistable Higgs bundles are given. In particular, the conjecture is proved for K3 and Enriques surfaces, and some related classes of surfaces.
\end{quote}
}

\bigskip
 
 \let\svthefootnote\thefootnote
\let\thefootnote\relax\footnote{
\hskip-\parindent {\em Date: } \today  \\
{\em 2000 Mathematics Subject Classification:} 14F05, 14H60, 14J60 \\ 
{\em Keywords:} Higgs bundles, Bogomolov inequality, restriction to curves, K3 surfaces \\
Email: {\tt  bruzzo@sissa.it, valeriano@ime.unicamp.br, alessiologiudic@gmail.com}
}
\addtocounter{footnote}{-1}\let\thefootnote\svthefootnote

 \bigskip
 \section{Introduction}
 
 In \cite{BBGL} it was shown that the Higgs field of a polystable Higgs bundle over a simply connected Calabi-Yau manifold $X$ necessarily vanishes. This result does not hold true in the semistable case, as the following example shows: take
$E = \Ol_X\oplus \Omega_X^1$ with Higgs field $\phi(f,\omega)=(\omega,0)$. As the cotangent bundle $\Omega_X^1$
is polystable, $E$ is polystable as well,  hence semistable;  so $(E,\phi)$ is a fortiori Higgs-semistable, but the Higgs field is nonzero
(note that  $(E,\phi)$ is not polystable as a Higgs bundle).

Nevertheless, possible Higgs fields for semistable Higgs bundles on Calabi-Yau manifolds are strongly constrained. For rank 2, no nontrivial Higgs fields exist. The same is true for rank 3 when $\dim X \ge 3$. This will be shown in section \ref{lowrank}. Section \ref{sec:K3} treats the case of rank 3 semistable Higgs bundles on K3 surfaces, offering a classification which depends on the rank of the kernel of the Higgs field.
In section \ref{vanishingtf} we extend the result of \cite{BBGL} to the torsion-free, non-locally-free case; this will be needed later on, but it is also a result of some interest in itself.

The existence of semistable Higgs bundles is related to a conjecture which was established in \cite{BG3} 
(see \cite{BRcong} for a review). The conjecture is about an extension of the following result, first proved in \cite{NakayamaNo},
which generalizes to higher dimensions Miyaoka's semistability criterion for bundles on curves \cite{Mi}.
Let  $E$ be a vector bundle on a smooth polarized variety $(X,H)$, and let $\Delta(E)\in H^4(X,\Q)$ be its
discriminant, i.e., the characteristic class
$$\Delta(E) = c_2(E) -\frac{r-1}{2r}c_1(E)^2,$$
where $r = \rk E$. 

\begin{thm}\label{TeoBO}  The following conditions are equivalent:
\begin{enumerate}
   \item $E$ is semistable, and $\Delta(E)\cdot H^{n-2}=0$;
  \item for any morphism $f\colon C\to X$, where $C$ is a smooth projective curve, the vector bundle $f^*(E)$ is semistable.
\end{enumerate}
\end{thm}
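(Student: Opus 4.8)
The plan is to prove the two implications separately, in each case cutting $X$ with general members of $|H|$ to descend to curves and surfaces, and leaning on two standard inputs: the \emph{Bogomolov inequality}, which gives $\Delta(E)\cdot H^{n-2}\ge 0$ for any $H$-semistable sheaf, and the \emph{Mehta--Ramanathan restriction theorem}, which guarantees that the restriction of an $H$-semistable sheaf to a general complete intersection curve $C=D_1\cap\dots\cap D_{n-1}$, with $D_i\in|m_iH|$ and $m_i\gg 0$, is again semistable. Here $n=\dim X$, and $\mu_H$ denotes the slope with respect to $H$.

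\emph{Implication (ii)$\Rightarrow$(i).} Semistability of $E$ is the formal half: if $F\subset E$ were saturated with $\mu_H(F)>\mu_H(E)$, then for a general high-degree complete intersection curve $C$ (missing the singular loci of $F$ and $E/F$) the inclusion $F|_C\hookrightarrow E|_C$ would have $\mu_C(F|_C)/\mu_C(E|_C)=\mu_H(F)/\mu_H(E)>1$, contradicting the semistability of $E|_C$ granted by (ii). For the discriminant, Bogomolov gives $\Delta(E)\cdot H^{n-2}\ge 0$; restricting to a general complete intersection surface $S$ with $[S]=H^{n-2}$ and $H_S=H|_S$ one has $\Delta(E|_S)=\Delta(E)\cdot H^{n-2}$, with $E|_S$ still semistable on every curve. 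One is thus reduced to a surface, where the matching upper bound $\Delta(E)\cdot H^{n-2}\le 0$ is the subtler point: I would obtain it by contraposition through an effective restriction statement in the spirit of Bogomolov's and Langer's theorems, to the effect that a strictly positive discriminant is detected by an unstable restriction to a curve of controlled degree.

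\emph{Implication (i)$\Rightarrow$(ii).} This is the substantial direction, and I would first reduce to $E$ stable. Choose a Jordan--Hölder filtration with stable quotients $G_1,\dots,G_\ell$, all of slope $\mu_H(E)$. The engine is the additivity of the discriminant under an extension $0\to E'\to E\to E''\to 0$ of sheaves of equal slope,
\[
\Delta(E)\cdot H^{n-2}=\Delta(E')\cdot H^{n-2}+\Delta(E'')\cdot H^{n-2}-\frac{\bigl(r''c_1(E')-r'c_1(E'')\bigr)^2\cdot H^{n-2}}{2\,r\,r'\,r''},
\]
with $r'=\rk E'$, $r''=\rk E''$, $r=r'+r''$. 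The class $\xi=r''c_1(E')-r'c_1(E'')$ satisfies $\xi\cdot H^{n-1}=0$ since the slopes agree, so the Hodge index theorem yields $\xi^2\cdot H^{n-2}\le 0$ and makes the last term nonnegative. As each $\Delta(G_i)\cdot H^{n-2}\ge 0$ by Bogomolov, the hypothesis $\Delta(E)\cdot H^{n-2}=0$ forces inductively both $\Delta(G_i)\cdot H^{n-2}=0$ for all $i$ and the vanishing of every cross term; the equality case of the Hodge index theorem then makes $\xi$ numerically trivial, i.e. $c_1(G_i)/\rk G_i\equiv c_1(E)/r$ for all $i$. Hence the $f^*G_i$ all share the slope $\mu_C(f^*E)$, so $f^*E$ is an iterated extension of bundles of equal slope; since an extension of semistable bundles of equal slope is semistable, it suffices to treat the case $E=G$ stable.

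\emph{The stable extremal case} is where the real difficulty lies and is the step I expect to absorb the bulk of the argument. For a stable $G$ with $\Delta(G)\cdot H^{n-2}=0$ I would invoke the Kobayashi--Hitchin correspondence (Donaldson--Uhlenbeck--Yau): stability supplies an irreducible Hermitian--Einstein connection, and Lübke's inequality---whose equality case is exactly $\Delta(G)\cdot H^{n-2}=0$---forces this connection to be projectively flat, i.e. of central curvature. Projective flatness is preserved under pullback, since $f^{*}$ of a central-curvature connection is again central; on the curve $C$ every such connection is automatically Hermitian--Einstein, so $f^*G$ is polystable, in particular semistable. This analytic input is the main obstacle: a purely algebraic substitute would have to bound, in a family of curves sweeping out $X$, the jumping of the relative Harder--Narasimhan filtration by the discriminant and show that $\Delta(G)\cdot H^{n-2}=0$ leaves no room for instability on any member, a specialization argument that is considerably more delicate.
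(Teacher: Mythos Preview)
The paper does not actually prove Theorem~\ref{TeoBO}; it is quoted as a known result, attributed to Nakayama \cite{NakayamaNo}, with a later proof by different techniques in \cite{BHR}. So there is no in-paper argument to compare your proposal against. That said, your outline is broadly in the spirit of the \cite{BHR} approach: reduce via Jordan--H\"older and the Hodge index theorem to the stable case, then invoke the Hermitian--Einstein correspondence and the equality case of the Bogomolov--L\"ubke inequality to obtain projective flatness, which pulls back along any $f\colon C\to X$. The paper further remarks that, via Simpson's Theorem~2 in \cite{S92}, the condition $\Delta(E)\cdot H^{n-2}=0$ for a semistable bundle is equivalent to $\Delta(E)=0$; you could cite that directly rather than descending to a surface by hand.

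Two places in your write-up warrant tightening. First, in (ii)$\Rightarrow$(i) your line ``$\mu_C(F|_C)/\mu_C(E|_C)=\mu_H(F)/\mu_H(E)>1$'' is not well-posed when $\mu_H(E)\le 0$; the honest statement is simply that degrees on a complete intersection curve $C\in|m_1H|\cap\dots\cap|m_{n-1}H|$ scale by $m_1\cdots m_{n-1}$, so $\mu_C(F|_C)>\mu_C(E|_C)$ follows directly from $\mu_H(F)>\mu_H(E)$ without any ratio. Second, the step ``a strictly positive discriminant is detected by an unstable restriction to a curve of controlled degree'' is the genuine content of (ii)$\Rightarrow$(i) and you leave it as a black box; Bogomolov's effective restriction theorem (or Langer's refinement) does give exactly this on a surface, but you should state precisely which version you are invoking and check that it yields instability on \emph{some} curve rather than merely failing to guarantee stability. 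As written, that step is a pointer rather than an argument.
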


The property in (ii), both for ordinary   and Higgs bundles, will be called {\em curve semistability.}

Actually using Theorem 2 in \cite{S92} the condition $\Delta(E)\cdot H^{n-2}=0$ is shown to be equivalent --- for a semistable bundle --- to $\Delta(E)=0$. In this form Theorem \ref{TeoBO} was stated and proved with different techniques in \cite{BHR}. It is now quite natural to ask if this theorem holds true also for Higgs bundles. As it was proved in \cite{BHR}, one has:

\begin{thm}  A semistable Higgs bundle   $(E,\phi)$ on $(X,H)$ with $\Delta(E) = 0$ is curve  semistable.
\end{thm}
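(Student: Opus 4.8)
The plan is to reduce curve semistability to a numerical positivity property of $(E,\phi)$ that is manifestly preserved under arbitrary pullback. Fix a non-constant morphism $f\colon C\to X$ (the constant case being trivial), let $Y=f(C)$ be its image curve, and choose divisors $D_1,\dots,D_{n-2}\in |mH|$ with $m\gg0$ passing through $Y$ and cutting out a smooth surface $S=D_1\cap\dots\cap D_{n-2}\supseteq Y$. Since $f$ factors through $S$, it suffices to treat the surface case, \emph{provided} one knows that $(E|_S,\phi|_S)$ is still a semistable Higgs bundle with vanishing discriminant. The discriminant is immediate: on $S$ one has $\Delta(E|_S)=m^{\,n-2}\bigl(\Delta(E)\cdot H^{n-2}\bigr)=0$ by functoriality of Chern classes.

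The core of the argument is the surface case. Here I would invoke the Bogomolov inequality for semistable Higgs bundles on a polarized surface, namely $\Delta\ge0$, and exploit its equality case. After twisting by a $\Q$-line bundle so that $c_1(E)=0$ --- which changes neither $\Delta$, nor (Higgs-)semistability, nor curve semistability --- I would argue that a semistable Higgs bundle with $c_1=0$ and $\Delta=0$ admits a Jordan--H\"older filtration whose graded pieces are \emph{stable} Higgs bundles, again with vanishing Chern classes. Each such stable piece is \emph{Higgs-numerically-flat}: both it and its dual are nef as Higgs bundles. It is precisely the equality in Bogomolov's inequality that forces this flatness, leaving no room for a nontrivial destabilizing behaviour.

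To conclude I would use two properties of Higgs-numerical-flatness: it is a closed numerical condition stable under pullback along any morphism of projective varieties, and on a smooth projective curve it coincides with semistability of degree zero. Hence each graded piece pulls back under $f$ to a degree-zero semistable Higgs bundle on $C$; since extensions of semistable Higgs bundles of a common slope are semistable, the pulled-back filtration reassembles to exhibit $f^*(E,\phi)$ as semistable, which is exactly curve semistability.

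The step I expect to be the main obstacle is the restriction used in the first paragraph: I need $(E|_S,\phi|_S)$ to be semistable for the \emph{specific} complete intersection $S$ that is forced to contain $Y$, whereas the Mehta--Ramanathan restriction theorem only guarantees semistability of the restriction to a \emph{general} member of $|mH|$. This is exactly where the hypothesis $\Delta(E)=0$ becomes indispensable: it turns the Bogomolov inequality into an equality and rigidifies $(E,\phi)$, so that no special divisor through $Y$ can destabilize it. Concretely, I would circumvent the general-position requirement by transporting the numerical-flatness characterization through the restriction, using that the nefness of the tautological classes governing Higgs-numerical-flatness persists under restriction to every smooth member of a sufficiently positive linear system.
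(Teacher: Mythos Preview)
The paper does not supply its own proof of this theorem; it simply attributes the result to \cite{BHR} and states it. So there is no in-paper argument to compare against, and the relevant question is whether your sketch stands on its own.

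Your second and third paragraphs contain the right mechanism and are essentially the approach of \cite{BHR}: twist so that $c_1$ is numerically trivial, use the Bogomolov inequality at equality to force each Jordan--H\"older graded piece to have vanishing Chern classes, invoke Simpson's correspondence (or, equivalently, the H-nflat formalism of this paper) to see that each stable piece with $c_1=c_2=0$ is numerically flat in the Higgs sense, and then use that H-nflatness is a nefness condition on the Higgs Grassmannians, hence preserved by arbitrary pullback $f\colon C\to X$. On $C$ this gives degree-zero semistable pieces, and closing up under extensions yields the claim. None of this needs $\dim X=2$; the Bogomolov inequality for semistable Higgs sheaves holds after cupping with $H^{n-2}$, and the Hodge-index/positivity argument that pins down the $c_1$'s of the graded pieces works the same way.

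The genuine gap is the detour in your first and last paragraphs. You propose to force a complete-intersection surface $S$ through the image curve $Y=f(C)$ and then run the surface argument there, but, as you yourself note, Mehta--Ramanathan gives you semistability of $(E|_S,\phi|_S)$ only for a \emph{general} $S$, not for one constrained to contain $Y$. Your proposed workaround---``transport the numerical-flatness characterization through the restriction''---is circular: once you know that $(E,\phi)$ is H-nflat on $X$, Proposition~\ref{results}(iv) already gives H-nflatness of $f^\ast(E,\phi)$ on $C$ directly, and the passage through $S$ is superfluous. Conversely, if you do not yet know H-nflatness on $X$, you have no tool to guarantee it on the specific $S$ you chose. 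The fix is simply to delete the surface reduction and run paragraphs~2--3 directly on $X$; nothing in that part of your argument is special to dimension two.
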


The conjecture is that also the converse implication holds. This was proved for varieties with nef tangent bundle in \cite{BruLoGiu}, and for some other classes of varieties obtained from these by some easy geometric constructions. In section \ref{ConjK3} we offer a proof of the conjecture for K3 surfaces; then, using results from \cite{BruLoGiu}, the conjecture also holds for Enriques surfaces
(using the language of \cite{BruLoGiu}, we thus prove that K3 and Enriques surfaces are {\em Higgs varieties}.) The proof will use the results of the previous sections, and the theory of {\em Higgs numerically flat} Higgs bundles \cite{BG3,BBG}.

As a related result, in section \ref{appltoconj} we  show that on K3 surfaces there are no rank 3 curve semistable Higgs bundles with nonzero Higgs field. 
 
 In sections \ref{vanishingtf} and \ref{lowrank} we shall consider bundles on compact K\"ahler manifolds,
 while in sections \ref{sec:K3} through  \ref{ConjK3} we shall restrict to smooth projective varieties over $\mathbb C$.
 
 We conclude this introduction with the basic definitions about Higgs sheaves.
 Let $X$ be an $n$-dimensional compact complex manifold $X$ equipped with a K\"ahler form $\omega$.
The degree of a   coherent $\Ol_X$--module $F$ is defined as
$$\deg F  = \int_X c_1(F) \wedge \omega^{n-1}$$
and if $F$ has positive rank its slope is defined  as 
$$\mu( F) = \frac{\deg F}{\rk  F}. $$
If $X$ is a smooth algebraic variety over $\mathbb C$ with a polarization $H$, one can give an intersection-theoretic definition
of the degree
$$ \deg F = c_1(F) \cdot H^{n-1}.$$

\begin{defin} A Higgs sheaf  on $X$ is a pair $(E,\phi)$, where $E$ is a torsion-free
coherent sheaf on $X$  and $\phi  \colon E  \to  E \otimes \Omega_X^1$ is a morphism of
$\Ol_X$-modules
such that $\phi\wedge\phi= 0$, where $\phi\wedge\phi$ is the composition
$$ E \xrightarrow{\phi}   E \otimes  \Omega_X^1
\xrightarrow{\phi\otimes \operatorname{id}}  E \otimes  \Omega_X^1 \otimes  \Omega_X^1 \to  E\otimes  \Omega_X^2.$$
A Higgs subsheaf
 of a Higgs sheaf $(E,\phi)$ is a   subsheaf $G$ of $E$
such that $\phi(G) \subset G\otimes\Omega_X^1$.
A Higgs bundle is a Higgs sheaf  whose underlying coherent sheaf is locally free.

If  $(E,\phi)$ and $ (G,\psi)$ are
Higgs sheaves, a morphism $f\colon (E,\phi) \to (G,\psi) $ is a homomorphism of $\Ol_X$-modules
$f\colon  E\to G$ such that the diagram
$$\xymatrix{
E\ar[r]^f \ar[d]_\phi & G \ar[d]^\psi \\
E\otimes\Omega^1_X \ar[r]^{f\otimes\id} & G\otimes\Omega^1_X }
$$
commutes.
\label{defHnef}
\end{defin}

\begin{defin} A Higgs sheaf $ (E,\phi) $ is semistable (respectively, stable) if
$\mu(G)\le \mu(E)$ (respectively, $\mu(G)< \mu(E)$) for every
Higgs subsheaf $G$ of   $ (E,\phi) $ with $0< \rk  G < \rk  E$. It is polystable if it is the direct sum  of stable Higgs sheaf having the same slope.\end{defin}

 \smallskip\noindent {\bf Acknowledgements.} The research this paper is based upon was mostly made during the authors' visits to the University of Pennsylvania (Philadelphia) and Universidade Estadual de Campinas (UNICAMP). The authors are thankful to those institutions for the ospitality and   support. This research is partly supported by INdAM, and  PRIN ``Geometria delle variet\`a algebriche''. V.L. is supported by  the  FAPESP post-doctoral grant No.~2015/07766-4.  U.B. is a member of the VBAC group.

 \bigskip

 \section{Vanishing of Higgs fields for polystable sheaves}\label{vanishingtf}
 
 In this section we extend to the torsion-free case the following result {\cite{BBGL}.
Here by Calabi-Yau manifold we mean a K\"ahler manifold with vanishing first Chern class. We assume the manifold to be compact and connected. 
  
 \begin{thm} \cite[Corollary 2.6]{BBGL}\label{thm:Bisgiudice}
 For any polystable Higgs bundle over a simply connected, smooth Calabi-Yau manifold the Higgs field is identically zero.
\end{thm}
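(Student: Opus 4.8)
The plan is to combine the two canonical metrics available in this setting — the Ricci-flat Kähler metric on $X$ and the Higgs–Hermitian–Einstein metric on $(E,\phi)$ — with a Weitzenböck argument in which the Calabi–Yau condition plays the decisive role. First I would invoke Yau's theorem to fix a Ricci-flat Kähler form $\omega$ in the given class; with respect to $\omega$ the cotangent bundle $\Omega_X^1$ is Hermitian–Einstein of slope $0$. Since $(E,\phi)$ is polystable, Simpson's existence theorem provides a Hermitian metric $h$ on $E$ satisfying the Hitchin–Simpson equation $\sqrt{-1}\,\Lambda_\omega\big(F_h+[\phi,\phi^{*}]\big)=\mu(E)\,\id_E$, where $F_h$ is the curvature of the Chern connection and $\phi^{*}$ the $h$-adjoint of $\phi$.

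Next I would regard $\phi$ as a holomorphic section of $\End(E)\otimes\Omega_X^1$ and write down the Weitzenböck identity for $\|\phi\|^2$. The point is that, because $\phi$ takes values in the \emph{form} bundle $\Omega_X^1$, the curvature term in this identity splits into a contribution from the Ricci curvature of $(X,\omega)$ and one from the curvature of $\End(E)$: the former vanishes identically by Ricci-flatness, while the latter is pinned down by the Hitchin–Simpson equation. Integrating over the compact manifold $X$, I expect the cross terms to cancel and the remaining nonnegative pieces to force $\phi$ to be parallel for the tensor product of the Chern and Levi–Civita connections, together with $\Lambda_\omega[\phi,\phi^{*}]=0$. \emph{This is the technical heart of the argument, and the step I expect to be the main obstacle}: one must set up the Weitzenböck/Bochner–Kodaira formula with the correct signs so that Ricci-flatness genuinely removes the obstruction, and extract from the equality case that $\phi$ — and not merely $\operatorname{tr}\phi$ — is covariantly constant with commuting, normal components.

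Granting that $\phi$ is parallel, I would finish by a spectral argument that uses polystability in an essential way — as it must, since the example $E=\Ol_X\oplus\Omega_X^1$ of the introduction shows that a \emph{nonzero} parallel Higgs field can occur on a merely semistable bundle. Decomposing $(E,\phi)$ into Higgs-stable summands of slope $\mu(E)$, it suffices to treat a Higgs-stable pair. Writing $\phi=\sum_\alpha\phi_\alpha\,dz^\alpha$, the condition $\phi\wedge\phi=0$ gives $[\phi_\alpha,\phi_\beta]=0$, and the equality case above makes the $\phi_\alpha$ parallel and normal; they are therefore simultaneously diagonalizable by a parallel unitary frame, with \emph{constant} eigenvalues on the connected $X$, so the corresponding eigen-subbundles are parallel Higgs subsheaves. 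By Higgs-stability there can be only one, whence $\phi=\eta\cdot\id_E$ for a single parallel $1$-form $\eta$. Finally, a parallel $1$-form is harmonic and hence defines a class in $H^1(X,\mathbb{R})$, which vanishes because $X$ is simply connected; therefore $\eta=0$ and $\phi=0$. Equivalently, taking traces gives $\operatorname{tr}\phi=(\rk E)\,\eta\in H^0(X,\Omega_X^1)=0$, using $b_1(X)=0$.
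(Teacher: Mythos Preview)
Your proposal is correct and follows essentially the same route as the argument of \cite{BBGL} sketched in the paper: combine Yau's Ricci-flat metric with the Hermitian--Yang--Mills--Higgs metric on $(E,\phi)$, run a Bochner/Weitzenb\"ock argument exploiting Ricci-flatness to conclude that $\phi$ is parallel for the induced connection $\nabla^{\omega,h}$ on $\End(E)\otimes\Omega^1_X$, globalize the eigenspace decomposition of $\phi$ using simple connectedness, reduce to $\phi=\eta\cdot\id$, and finish with $H^0(X,\Omega^1_X)=0$. The differences are only organizational: you pass to a Higgs-stable summand first and let stability force a single eigen-subbundle, whereas the paper carries out the eigenspace splitting $E=\bigoplus_i F_i$ on the polystable bundle directly and then shows $\phi_i-\tfrac{1}{\rk F_i}\operatorname{tr}\phi_i\otimes\id_{F_i}=0$ on each piece; and the paper's version of the parallelism step (\cite[Prop.~2.2]{BBGL}) is packaged so as to invoke the equal-slope splitting Lemma~\ref{lemmaSimp} rather than a bare Weitzenb\"ock identity. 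One small point worth tightening: the passage from a pointwise simultaneous diagonalization to global parallel eigen-subbundles already uses \emph{simple} connectedness (to rule out monodromy permuting the eigenspaces), not merely connectedness as you wrote; the paper makes this use explicit.
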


The generalization is easily done when the variety is a K3 surface $S$. In fact, let $(E,\phi)$ be a polystable Higgs sheaf over $S$. We have the commutative diagram
\[
\xymatrix{
  0\ar[r]&E\ar[r]\ar[d]^\phi&E^{\vee\vee}\ar[d]^{\phi^{\vee\vee}}\\
  0\ar[r]&E\otimes\Omega^1_S\ar[r]&E^{\vee\vee}\otimes\Omega_S^1
}
\]
where $\phi^\vee$ is defined as the composition
\[
\xymatrix{
  E^\vee\ar[r]&E^\vee\otimes T_S\otimes\Omega^1_S\ar[rr]^-{\phi^\ast\otimes\id_{\Omega_S^1}}&&E^\vee\otimes\Omega^1_S
  }
\]
and analogously $\phi^{\vee\vee}$. The Higgs bundle $(E^{\vee\vee},\phi^{\vee\vee})$ is polystable, so $\phi^{\vee\vee}=0$, and by the diagram also $\phi$ must vanish. 
 
For the higher-dimensional case, we start by recalling a couple of definitions:
\begin{defin}
 Let $(E,\phi)$ be a torsion-free sheaf on a  compact K\"ahler manifold $X$. We denote by $\omega$ the K\"ahler form of $X$, and by $S$ the locus where $E$ fails to be locally free (which has codimension at least $2$). An hermitian metric $h$ on the vector bundle $E|_{X\setminus S}$ is said to be \emph{admissible} if:
 \begin{enumerate}
  \item the curvature form $F_h$ of the associated Chern connection $\nabla^h$  is square integrable, and
  \item the mean curvature $\operatorname{tr}_\omega(F_h)$ is bounded.
 \end{enumerate}
 
 An admissible hermitian metric $h$ on $(E,\phi)$ is called \emph{hermitian Yang-Mills-Higgs} if the equation
 \[
  \operatorname{tr}_\omega\left(F_h+[\phi,\phi^\ast]\right)=\lambda\cdot\id_E
 \]
is satisfied on $X\setminus S$ for some $\lambda\in\Com$.
\end{defin}

The following result has been proved by Biswas and Schumacher:
\begin{thm}\cite[Corollary 3.5]{BSchYM}
 A Higgs sheaf $(E,\phi)$ on a compact K\"ahler manifold $X$ is polystable if and only if there exists an hermitian Yang-Mills-Higgs metric $h$ on it (uniquely determined up to homotheties). In particular, the Yang-Mills-Higgs connection $\nabla^h$ is unique.
\end{thm}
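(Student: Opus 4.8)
The statement is a Hitchin--Kobayashi (Donaldson--Uhlenbeck--Yau) correspondence, extended simultaneously in two directions: to the Higgs setting, where the commutator $[\phi,\phi^\ast]$ is added to the curvature, and to the torsion-free setting, where the metric lives only on the locally free locus $X\setminus S$ and must be required to be admissible. The plan is to prove the two implications separately. The implication ``existence of an admissible hermitian Yang--Mills--Higgs metric $\Rightarrow$ polystability'' is the soft, Chern--Weil side; the converse is the hard analytic side, where one actually constructs the metric.

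Suppose first that $h$ is admissible and satisfies $\operatorname{tr}_\omega(F_h+[\phi,\phi^\ast])=\lambda\cdot\id_E$ on $X\setminus S$. Let $G$ be a Higgs subsheaf with $0<\rk G<\rk E$; passing to its saturation (which is again $\phi$-invariant) only increases the slope, so I may assume $G$ saturated. Off an analytic subset of codimension $\ge 2$ the inclusion $G\hookrightarrow E$ is a subbundle, represented by an $L^2_1$ orthogonal projection $\pi$ with $(\id-\pi)\bar\partial\pi=0$, i.e.\ a weakly holomorphic subbundle in the sense of Uhlenbeck--Yau. The Chern--Weil identity then reads
\[
\deg G=\int_{X\setminus S}\operatorname{tr}\!\big(\pi\,\operatorname{tr}_\omega F_h\big)\,\frac{\omega^n}{n!}-\big\|\bar\partial\pi\big\|_{L^2}^2 ,
\]
and substituting the Yang--Mills--Higgs equation (together with $\operatorname{tr}[\phi,\phi^\ast]=0$, which fixes $\lambda$ to be $\mu(E)$ up to the volume factor) yields
\[
\deg G=\mu(E)\,\rk G-\int_{X\setminus S}\operatorname{tr}\!\big(\pi\,\operatorname{tr}_\omega[\phi,\phi^\ast]\big)\,\frac{\omega^n}{n!}-\big\|\bar\partial\pi\big\|_{L^2}^2 .
\]
Since $G$ is $\phi$-invariant, $\phi$ is block upper-triangular for the smooth orthogonal splitting $E=G\oplus G^\perp$; the diagonal commutator integrates to zero and the Higgs correction reduces to the $L^2$-norm of the off-diagonal (``Higgs second fundamental form'') block, which is $\ge 0$. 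Hence both correction terms push $\mu(G)$ below $\mu(E)$, giving semistability. When $\mu(G)=\mu(E)$ both norms vanish, so $\pi$ is parallel and $\phi$-invariant and $E=G\oplus G^\perp$ is an orthogonal splitting of Higgs sheaves; iterating this yields polystability.

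For the converse I would first reduce to the stable case: a polystable Higgs sheaf is a finite direct sum of stable ones of equal slope, the common slope forcing a common factor $\lambda$, so the orthogonal direct sum of the metrics on the stable summands is again admissible and hermitian Yang--Mills--Higgs. For a single stable Higgs sheaf I would construct the admissible metric on $X\setminus S$ through the analytic existence machinery for the Yang--Mills--Higgs functional, e.g.\ the nonlinear heat flow $h^{-1}\dot h=-\big(\operatorname{tr}_\omega(F_h+[\phi,\phi^\ast])-\lambda\cdot\id_E\big)$, combining Simpson's treatment of the Higgs term with the Bando--Siu theory of admissible Hermitian--Einstein metrics on stable reflexive sheaves. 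The main obstacle is precisely the interaction of these two features: one must obtain $C^0$ and higher a priori estimates that remain uniform up to the singular set $S$, so that the limiting metric has square-integrable curvature and bounded mean curvature (hence is admissible) and extends across $S$ by a Bando--Siu removable-singularity argument, while simultaneously controlling the Higgs contribution through a Bochner--Kodaira estimate in which $[\phi,\phi^\ast]$ enters with a favorable sign. Stability is used exactly as in the classical case to prevent the Donaldson functional from diverging to $-\infty$ along the flow: any blow-up of the metric would produce a weakly holomorphic $\phi$-invariant subsheaf violating $\mu(G)<\mu(E)$. Finally, uniqueness up to homothety---and hence uniqueness of $\nabla^h$---follows from the convexity of the Donaldson functional along geodesics in the space of hermitian metrics, or equivalently from a maximum principle applied to the self-adjoint endomorphism relating two solutions, the Higgs term once more entering with the correct sign.
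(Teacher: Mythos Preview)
The paper does not give its own proof of this theorem: it is quoted verbatim as \cite[Corollary 3.5]{BSchYM} and used as a black box, so there is no argument in the paper to compare your proposal against. Your outline is a faithful sketch of the strategy actually carried out in the cited reference---the Chern--Weil/second-fundamental-form computation for the ``metric $\Rightarrow$ polystable'' direction, and the combination of Simpson's Higgs heat-flow estimates with the Bando--Siu admissible-metric theory for the converse---so in that sense it is correct as a high-level summary, but note that in the present paper no proof is expected or supplied.

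One small cautionary remark on your sketch: in the degree computation for a $\phi$-invariant saturated $G$, the Higgs correction $\int\operatorname{tr}\big(\pi\,\operatorname{tr}_\omega[\phi,\phi^\ast]\big)$ is not literally ``the diagonal commutator integrates to zero plus a nonnegative off-diagonal norm''; rather, writing $\phi$ block upper-triangular as $\begin{pmatrix}\phi_G & \beta\\ 0 & \phi_{G^\perp}\end{pmatrix}$ one finds that the $G$-block of $[\phi,\phi^\ast]$ equals $[\phi_G,\phi_G^\ast]+\beta\beta^\ast$, whose trace against $\omega$ integrates to $\|\beta\|_{L^2}^2\ge 0$ (the pure $[\phi_G,\phi_G^\ast]$ part has zero trace). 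This is exactly the sign you want, but the mechanism is slightly different from what you wrote.
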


As a direct consequence of \cite[Proposition 3.3]{S88} one has:
\begin{lemma}\label{lemmaSimp}
 Let $(E,\phi)$ be a polystable Higgs bundle over a connected open dense subset $Y$ of a compact    K\"ahler manifold, and let $F\subset E$ be a saturated Higgs subsheaf with $\mu(F)=\mu(E)$. Then:
 \begin{itemize}
  \item $F$ is a subbundle;
  \item $F^\perp$, where the orthogonal is taken with respect to the hermitian Yang-Mills-Higgs metric $h$ on $Y$, is a 
  holomorphic bundle;
  \item $E\simeq F\oplus F^\perp$.
 \end{itemize}
\end{lemma}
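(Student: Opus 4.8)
The plan is to deduce the statement from the Gauss--Codazzi formalism for Higgs bundles carrying a Hermitian--Yang--Mills--Higgs metric, which is exactly the content of \cite[Proposition~3.3]{S88}. Since $(E,\phi)$ is polystable, on $Y$ it is equipped with an admissible Hermitian--Yang--Mills--Higgs metric $h$, so that $\operatorname{tr}_\omega(F_h+[\phi,\phi^\ast])=\lambda\cdot\id_E$ for the constant $\lambda$ attached to $\mu(E)$. Because $F$ is saturated, the quotient $E/F$ is torsion-free and therefore locally free away from a closed analytic subset $\Sigma\subset Y$ of codimension at least $2$; over $Y'=Y\setminus\Sigma$ the subsheaf $F$ is a genuine subbundle, and I denote by $\pi\in C^\infty(\operatorname{End}(E|_{Y'}))$ the $h$-orthogonal projection onto it. The assumption that $F$ is a Higgs subsheaf, $\phi(F)\subset F\otimes\Omega^1_X$, is precisely the vanishing of the block $(\id-\pi)\,\phi\,\pi$; write $\beta=(\id-\pi)\,\bar\partial\pi$ for the second fundamental form of the $C^\infty$ splitting $E|_{Y'}=F\oplus F^\perp$ and $\tau=\pi\,\phi\,(\id-\pi)$ for the surviving off-diagonal block of $\phi$.

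The first step is to write down the Gauss--Codazzi decomposition of the combined curvature $F_h+[\phi,\phi^\ast]$ along this splitting, contract it with $\omega$, pair against $\pi$ and integrate over $Y'$. Here the admissibility of $h$ (square-integrable curvature and bounded mean curvature) is what makes every integral converge and kills the boundary contributions along $\Sigma$ and along the frontier of $Y$; this is the analytic heart of \cite[Proposition~3.3]{S88}, and it produces the slope identity
\[
\deg F=\rk(F)\,\mu(E)-c\left(\|\beta\|_{L^2}^2+\|\tau\|_{L^2}^2\right),
\]
with $c>0$. The two norms occur with this sign because $\beta$ and $\tau$ are exactly the obstructions to $\pi$ being parallel for the metric connection twisted by $\phi$ and $\phi^\ast$.

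Now I feed in the hypothesis $\mu(F)=\mu(E)$: the identity forces $\|\beta\|_{L^2}^2=\|\tau\|_{L^2}^2=0$, hence $\beta=0$ and $\tau=0$ on $Y'$. The vanishing $\beta=0$ means $(\id-\pi)\bar\partial\pi=0$, so $\pi$ is a holomorphic projector and $F$, $F^\perp=\ker\pi$ are holomorphic subbundles of $E|_{Y'}$ preserved by the Chern connection, with $E|_{Y'}\simeq F\oplus F^\perp$; the vanishing $\tau=0$ says that $\phi$ also preserves $F^\perp$, upgrading this to a direct sum decomposition of Higgs bundles. Finally $\pi$ is a holomorphic section of $\operatorname{End}(E)$ on $Y'$, bounded because it is an $h$-orthogonal projector, and $\Sigma$ has codimension at least $2$; by the Riemann/Hartogs extension theorem it extends holomorphically across $\Sigma$, the extension stays idempotent by continuity, and therefore $F$, $F^\perp$ and the splitting are defined over all of $Y$. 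This gives the three assertions.

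I expect the real obstacle to be the Chern--Weil step on the open manifold $Y$: one must justify the convergence of the integrals and the absence of boundary terms both at $\Sigma$ and at the frontier of $Y$ inside the ambient compact manifold, which is precisely where the admissibility conditions on $h$ and Simpson's estimates are indispensable. Once the slope identity is available, the vanishing of $\beta$ and $\tau$ and the holomorphic extension of $\pi$ are formal.
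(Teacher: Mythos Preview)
Your proposal is correct and follows exactly the route the paper takes: the paper states the lemma as ``a direct consequence of \cite[Proposition~3.3]{S88}'' and gives no further argument, while you have simply unpacked that citation---the Chern--Weil identity for the induced metric on a $\phi$-invariant subsheaf, the vanishing of the second fundamental form and the off-diagonal Higgs block forced by $\mu(F)=\mu(E)$, and the Hartogs-type extension of the resulting holomorphic projector across the codimension-$2$ locus. Your caveat about the analytic justification on the noncompact $Y$ is well placed; that is precisely what the admissibility hypotheses on $h$ are designed to handle, and is already built into Simpson's argument.
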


The key observation to prove Theorem \ref{thm:Bisgiudice} is given by \cite[Proposition 2.2]{BBGL}, which we briefly recall here. Let $(E,\phi)$ be a polystable Higgs bundle on a K\"ahler manifold $X$ and denote by $h$ the hermitian Yang-Mills-Higgs metric on $E$. If we denote by $\nabla^{\omega,h}$ the connection on $\End(E)\otimes\Omega^1_X$ induced by $\nabla^h$ and by the Levi-Civita connection on $\Omega^1_X$ associated with $\omega$, then one proves $\nabla^{\omega,h}\phi=0$ (i.e., $\phi$ is flat as a section of $\End(E)\otimes\Omega_X^1$). This needs Lemma \ref{lemmaSimp}. Moreover, let $Z$ be the regular locus of $E$;
note that $\operatorname{codim} (X\setminus Z) \ge 2$.
One uses the pointwise eigenspace decomposition of $\phi$, and the fact that $X$ (and therefore also $Z$) is simply connected to establish canonical isomorphisms
$$T_x X\simeq T_yX,\qquad E_x\simeq E_y$$
for every pair of points $x$, $y\in Z$. Then the pointwise eigenspace decomposition of $E$ produces a direct sum splitting
$$ E\vert_{Z} = \bigoplus_{i=1}^m F_i.$$
There is an induced Higgs field $\phi_i$ on $F_i$, and one defines
$$\tilde \phi_i = \phi_i - \frac{1}{\rk F_i} \operatorname{tr} \phi_i \otimes \operatorname{id}_{F_i}.$$
Proceeding as in  \cite[Proposition 2.5]{BBGL} one shows that $\tilde \phi_i =0$, so that 
$$ \phi -  \frac{1}{\rk E} \operatorname{tr} \phi \otimes \operatorname{id}_{E} =0$$
on $Z$, and therefore on $X$, and since $H^0(X,\Omega^1_X)=0$, we eventually have $\phi = 0 $.

In this way we have proved the needed generalization of Theorem \ref{thm:Bisgiudice}, as expressed
by the following result.

 \begin{thm} \label{thm:BisgiudiceTF}
 For any polystable Higgs sheaf over a simply connected Calabi-Yau manifold the Higgs field is identically zero.
\end{thm}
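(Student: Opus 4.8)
The plan is to carry the argument of \cite{BBGL} over from the compact bundle situation to the locally free locus $Z = X\setminus S$ of $E$, where $S$ is the singular locus of the sheaf, equipped with the admissible metric supplied by Biswas--Schumacher. Since $(E,\phi)$ is a polystable Higgs sheaf on the compact K\"ahler manifold $X$, the Biswas--Schumacher correspondence \cite{BSchYM} furnishes an admissible hermitian Yang--Mills--Higgs metric $h$ on the bundle $E|_Z$, and $(E|_Z,\phi|_Z,h)$ is the object on which I would work. The decisive topological input is that $Z$ remains simply connected: because $\operatorname{codim}(S)\ge 2$, removing $S$ from the simply connected $X$ does not change the fundamental group.

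On $Z$ I would first establish, following \cite[Proposition 2.2]{BBGL} and using Lemma \ref{lemmaSimp} to split off, as direct summands, the saturated Higgs subsheaves of slope equal to $\mu(E)$, that $\phi$ is parallel for the induced connection, i.e. $\nabla^{\omega,h}\phi = 0$. Combined with the simple connectedness of $Z$, this yields the canonical isomorphisms $T_xX\simeq T_yX$ and $E_x\simeq E_y$ between the fibres at any two points of $Z$, hence a global splitting $E|_Z = \bigoplus_{i=1}^m F_i$ into eigenbundles of $\phi$ with induced Higgs fields $\phi_i$. Arguing as in \cite[Proposition 2.5]{BBGL}, the traceless parts $\tilde\phi_i = \phi_i - \frac{1}{\rk F_i}\operatorname{tr}\phi_i\otimes\id_{F_i}$ all vanish, so that $\phi - \frac{1}{\rk E}\operatorname{tr}\phi\otimes\id_E = 0$ on $Z$. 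Finally $\operatorname{tr}\phi$ is a global holomorphic $1$-form, and $H^0(X,\Omega^1_X)=0$ for a simply connected compact K\"ahler manifold (by Hodge theory $H^{1,0}\hookrightarrow H^1=0$), so $\operatorname{tr}\phi=0$ and therefore $\phi|_Z=0$. Because $E$ is torsion-free and $Z$ is dense, a morphism $E\to E\otimes\Omega^1_X$ vanishing on $Z$ vanishes identically, and we conclude $\phi=0$.

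The main obstacle I anticipate is analytic rather than formal: the integration-by-parts (Bochner--Chern--Weil) estimates behind \cite[Propositions 2.2 and 2.5]{BBGL} were originally carried out on a \emph{compact} manifold, whereas here they must be performed over the open manifold $Z$. What makes this work is precisely the admissibility of $h$ --- square-integrability of $F_h$ and boundedness of $\operatorname{tr}_\omega F_h$ --- together with $\operatorname{codim}(X\setminus Z)\ge 2$: these conditions guarantee convergence of the relevant integrals and the vanishing of the boundary contributions near $S$, so that the parallel-transport and trace-vanishing arguments go through. Verifying this convergence and boundary behaviour is the technical heart of the extension to the torsion-free case.
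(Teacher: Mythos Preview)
Your proposal is correct and follows essentially the same route as the paper: invoke the Biswas--Schumacher admissible Hermitian Yang--Mills--Higgs metric on the locally free locus $Z$, use $\operatorname{codim}(X\setminus Z)\ge 2$ to retain simple connectedness, run the parallelism and eigenspace-splitting arguments of \cite[Propositions 2.2 and 2.5]{BBGL} on $Z$ via Lemma~\ref{lemmaSimp}, and conclude from $H^0(X,\Omega^1_X)=0$. You are in fact more explicit than the paper about the analytic issue (integration by parts over the noncompact $Z$ controlled by admissibility of $h$), which the paper leaves implicit.
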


\begin{cor}\label{cor:gr}
 Let $(E,\phi)$ be a semistable Higgs sheaf over $X$. Then $\gr(\phi)=0$ and $\phi$ is nilpotent.
\end{cor}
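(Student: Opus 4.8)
The plan is to reduce the statement to Theorem~\ref{thm:BisgiudiceTF} by passing to the graded object attached to a Jordan--Hölder filtration. Since $(E,\phi)$ is a semistable Higgs sheaf, it admits a filtration by Higgs subsheaves
\[
0=E_0\subset E_1\subset\cdots\subset E_\ell=E,
\]
whose successive quotients $G_i=E_i/E_{i-1}$ are \emph{stable} Higgs sheaves, all of slope $\mu(G_i)=\mu(E)$, with $\bar{\phi}_i$ denoting the Higgs field induced on $G_i$. By definition of the associated graded, $\gr(E,\phi)=\bigoplus_{i=1}^\ell(G_i,\bar{\phi}_i)$, so that $\gr(\phi)=\bigoplus_i\bar{\phi}_i$. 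The first key step is to observe that $\gr(E,\phi)$ is a direct sum of stable Higgs sheaves of one and the same slope, hence \emph{polystable} by the very definition of polystability given above.

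Granting this, Theorem~\ref{thm:BisgiudiceTF} applies to the polystable Higgs sheaf $\gr(E,\phi)$ over the simply connected Calabi--Yau manifold $X$ and yields immediately $\gr(\phi)=\bigoplus_i\bar{\phi}_i=0$, i.e. $\bar{\phi}_i=0$ for every $i$. This gives the first half of the statement.

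It then remains to upgrade the vanishing of $\gr(\phi)$ into the nilpotency of $\phi$. The condition $\bar{\phi}_i=0$ says precisely that $\phi$ lowers the filtration by one step, that is $\phi(E_i)\subset E_{i-1}\otimes\Omega^1_X$ for each $i$ (with the convention $E_0=0$). Forming the $\ell$-fold iterate
\[
\phi^{(\ell)}\colon E\xrightarrow{\phi}E\otimes\Omega^1_X\xrightarrow{\phi\otimes\id}\cdots\longrightarrow E\otimes(\Omega^1_X)^{\otimes\ell}
\]
and applying the inclusion $\phi(E_i)\subset E_{i-1}\otimes\Omega^1_X$ repeatedly, one gets $\phi^{(\ell)}(E)=\phi^{(\ell)}(E_\ell)\subset E_0\otimes(\Omega^1_X)^{\otimes\ell}=0$. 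Thus $\phi^{(\ell)}=0$ and $\phi$ is nilpotent (by the integrability condition $\phi\wedge\phi=0$ this composite in fact factors through $E\otimes\Sym^\ell\Omega^1_X$, but this is not needed for the conclusion).

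The only genuinely delicate point is the existence and good behaviour of the filtration used at the outset: one must know that a Jordan--Hölder filtration exists in the category of Higgs sheaves and can be chosen so that the factors $G_i$ are \emph{torsion-free} stable Higgs sheaves of slope $\mu(E)$, since only such objects qualify as Higgs sheaves to which Theorem~\ref{thm:BisgiudiceTF} applies. This is precisely what is guaranteed by Simpson's theory of semistable Higgs sheaves (taking the subsheaves $E_i$ saturated so as to keep the quotients torsion-free); once this input is in place, the two remaining steps are purely formal.
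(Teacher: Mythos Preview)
Your argument is correct and follows exactly the same route as the paper: one observes that $(\gr(E),\gr(\phi))$ is polystable, applies Theorem~\ref{thm:BisgiudiceTF} to get $\gr(\phi)=0$, and then reads off $\phi(E_i)\subset E_{i-1}\otimes\Omega^1_X$ to conclude $\phi^{\ell}=0$. Your added remark about saturating the filtration so that the graded pieces are torsion-free is a point the paper leaves implicit, but otherwise the two proofs coincide.
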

\begin{proof}
$\gr(\phi)$ vanishes  as  $(\gr(E),\gr(\phi))$ is a polystable Higgs sheaf.

We claim that $\phi^s=0$, where $X$ is the length of the Jordan-H\"older filtration $\{(E_i,\phi_i)\}$. Indeed this is equivalent to show that $\phi(E_i)\subseteq E_{i-1}\otimes\om{X}$ for any $i\in\{1,\dots,s\}$, and this is ensured by the condition $\gr(\phi)=0$.
\end{proof}
Note that the  implication $\gr(\phi)=0\Rightarrow \phi$ is nilpotent holds for any variety. 

\bigskip
\section{Vanishing of Higgs fields for semistable sheaves of low rank}\label{lowrank} 
 
 We begin by presenting some preliminary results. $X$ will be a compact K\"ahler manifold. 
Given a holomorphic vector bundle $M$ on   $X$, an  \emph{$M$-pair} is a pair $(E,\phi)$, where $E$ is a torsion-free sheaf on $X$, and $\phi\colon E\to E\otimes M$ is a morphism.
 An $M$-pair  is said to be  semistable  if, for any $\phi$-invariant subsheaf $F$ of $E$, $\mu(F)\leq\mu(E)$.

\begin{prop}\label{prop:holp}
 Let $(E,\phi)$ be a semistable $M$-pair. If $M$ is semistable and has nonpositive degree, then $E$ is semistable as a sheaf.
\end{prop}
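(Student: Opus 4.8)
The plan is to show that any destabilizing subsheaf of $E$ would be forced to be $\phi$-invariant, which would contradict the semistability of the $M$-pair $(E,\phi)$. Suppose for contradiction that $E$ is not semistable as a sheaf. Then it has a nonzero maximal destabilizing subsheaf $F\subset E$, which is the first term of the Harder-Narasimhan filtration. Recall that $F$ is semistable and satisfies $\mu(F)=\mu_{\max}(E)>\mu(E)$, and crucially that $F$ is uniquely determined by these maximality properties.

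The key step is to argue that $F$ is automatically $\phi$-invariant, i.e. $\phi(F)\subseteq F\otimes M$. To see this, I would compose the restriction $\phi|_F\colon F\to E\otimes M$ with the quotient projection $E\otimes M\to (E/F)\otimes M$, obtaining an $\Ol_X$-morphism $\psi\colon F\to (E/F)\otimes M$. If I can show $\psi=0$, then $\phi(F)\subseteq F\otimes M$ as desired. The image of $\psi$ is a quotient of $F\otimes M$ and a subsheaf of $(E/F)\otimes M$. Here I would use the hypotheses on $M$: since $F$ is semistable, $F\otimes M$ is semistable (tensoring semistable sheaves by a semistable bundle preserves semistability), with slope $\mu(F\otimes M)=\mu(F)+\mu(M)\le \mu(F)$ because $\deg M\le 0$. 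On the other hand, by the defining property of the Harder-Narasimhan filtration, every slope appearing in $E/F$ is strictly smaller than $\mu(F)=\mu_{\max}(E)$; tensoring by $M$ and using $\mu(M)\le 0$ again, every quotient of $(E/F)\otimes M$ has slope strictly less than $\mu(F)$.

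Now I combine these slope inequalities. The sheaf $\im(\psi)$ is simultaneously a quotient of the semistable sheaf $F\otimes M$ of slope $\le \mu(F)$ and a subsheaf of $(E/F)\otimes M$, all of whose subsheaves have slope $<\mu(F)$ (this last point requires comparing $\mu_{\min}$ of $F\otimes M$ against $\mu_{\max}$ of $(E/F)\otimes M$). A nonzero morphism from a semistable sheaf to a sheaf whose maximal slope is strictly smaller than the minimal slope of the source is impossible, so $\psi=0$. Thus $F$ is $\phi$-invariant. But then $F$ is a $\phi$-invariant subsheaf with $\mu(F)>\mu(E)$, contradicting the assumed semistability of the $M$-pair. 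This contradiction shows $E$ is semistable as a sheaf.

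The main obstacle is making the slope comparison between $F\otimes M$ and $(E/F)\otimes M$ genuinely rigorous in the twisted setting. The clean statement one needs is that $\mu_{\min}(F\otimes M)\ge \mu_{\max}((E/F)\otimes M)$, or at least that no nonzero map can exist; this rests on the facts that $\mu_{\min}(F\otimes M)=\mu(F)+\mu_{\min}(M)$ when $F$ is semistable and $M$ is a bundle, and that $\mu_{\max}((E/F)\otimes M)=\mu_{\max}(E/F)+\mu_{\max}(M)$. One must verify that tensoring by a semistable bundle $M$ of nonpositive degree behaves well with respect to these extremal slopes on a compact Kähler manifold (rather than only a projective variety), which is where I would invoke the standard semistability-under-tensor-product results carefully. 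Once that slope bookkeeping is in place, the vanishing of $\psi$ and the resulting contradiction are immediate.
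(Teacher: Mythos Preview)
Your approach is the standard one and is essentially what the paper invokes: the paper's own proof simply observes that the argument of \cite[Theorem~2.10]{BruLoGiu} goes through when $E$ is merely torsion-free, and that argument is precisely the Harder--Narasimhan argument you outline---show that the maximal destabilizing subsheaf $F$ is automatically $\phi$-invariant by a slope comparison, contradicting semistability of the pair.

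One small slip worth fixing: the image of $\psi\colon F\to (E/F)\otimes M$ is a quotient of $F$, not of $F\otimes M$ as you wrote. This actually makes the argument cleaner than the version you sketched: since $F$ is semistable, every nonzero quotient of $F$ has slope $\ge\mu(F)$, while every subsheaf of $(E/F)\otimes M$ has slope at most $\mu_{\max}(E/F)+\mu(M)\le\mu_{\max}(E/F)<\mu(F)$, using only that $M$ is semistable of nonpositive degree and that tensoring a Harder--Narasimhan filtration by a semistable bundle shifts slopes by $\mu(M)$. So you do not need $\mu_{\min}(F\otimes M)$ at all. Your acknowledged technical input---that the tensor product of semistable sheaves (one of them locally free) on a compact K\"ahler manifold is semistable---is exactly what is needed, and is available in this setting.
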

 \begin{proof}
  The proof of \cite[Theorem 2.10]{BruLoGiu} holds true also when $E$ is only torsion-free.
 \end{proof}

\begin{lemma}\label{lemma:kerss} \cite[Proposition 5.5.(2)]{BRcong}
Suppose that $\Omega_X^1$ is semistable and has degree zero.  Then, for any semistable Higgs sheaf $(E,\phi)$, the kernel and the image of $\phi$ are semistable Higgs sheaves with the same slope as $E$.
\end{lemma}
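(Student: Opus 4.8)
The plan is to first upgrade the Higgs-semistability of $(E,\phi)$ to ordinary semistability of $E$, and then to play the kernel against the image by a see-saw argument. Since a Higgs subsheaf is precisely a $\phi$-invariant subsheaf, $(E,\phi)$ is a semistable $\Omega_X^1$-pair; as $\Omega_X^1$ is semistable with $\deg\Omega_X^1=0$, which is in particular nonpositive, Proposition \ref{prop:holp} applies and shows that $E$ is semistable as an $\Ol_X$-module. With this in hand I record the first slope bound: the subsheaf $\ker\phi\subseteq E$ is $\phi$-invariant (as $\phi$ annihilates it), hence a Higgs subsheaf, so the semistability of $(E,\phi)$ gives $\mu(\ker\phi)\le\mu(E)$. (The degenerate cases $\phi=0$ and $\phi$ injective are immediate, so I may assume $0<\rk\ker\phi<\rk E$.)

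For the image I would regard $\im\phi$ as a subsheaf of $E\otimes\Omega_X^1$. The key input here is that in characteristic zero the tensor product of two semistable sheaves is again semistable; thus $E\otimes\Omega_X^1$ is semistable, of slope $\mu(E)+\mu(\Omega_X^1)=\mu(E)$ since $\mu(\Omega_X^1)=0$. Consequently every subsheaf of $E\otimes\Omega_X^1$, and in particular $\im\phi$, has slope at most $\mu(E)$, i.e. $\mu(\im\phi)\le\mu(E)$. Now $\phi$ induces an isomorphism $E/\ker\phi\xrightarrow{\sim}\im\phi$, so from the exact sequence $0\to\ker\phi\to E\to\im\phi\to 0$ the slope $\mu(E)$ is the rank-weighted average of $\mu(\ker\phi)$ and $\mu(\im\phi)$. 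As both of these are $\le\mu(E)$, they are forced to equal $\mu(E)$.

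It then remains to promote these equalities to semistability. Since $\ker\phi$ is a subsheaf of the semistable sheaf $E$ with $\mu(\ker\phi)=\mu(E)$, it is itself semistable; likewise $\im\phi$ is a subsheaf of the semistable sheaf $E\otimes\Omega_X^1$ of the same slope, hence semistable (both are torsion-free, being subsheaves of torsion-free sheaves). Ordinary semistability is stronger than Higgs-semistability, since in the latter one tests fewer subsheaves, so both are semistable Higgs sheaves once endowed with their natural Higgs fields: the zero field on $\ker\phi$, and on $\im\phi$ the restriction $\psi=(\phi\otimes\id_{\Omega_X^1})|_{\im\phi}$, which maps $\im\phi$ into $\im\phi\otimes\Omega_X^1$ and satisfies $\psi\wedge\psi=0$ as a direct consequence of $\phi\wedge\phi=0$. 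I expect the main obstacle to be the two points where the hypotheses on $\Omega_X^1$ enter nontrivially: obtaining, via Proposition \ref{prop:holp}, that $E$ is an honest semistable sheaf, and invoking the characteristic-zero theorem that $E\otimes\Omega_X^1$ is semistable, which is exactly what yields $\mu(\im\phi)\le\mu(E)$ and makes the see-saw collapse to equalities; by comparison, checking that $\psi$ is a well-defined Higgs field on $\im\phi$ is a routine diagram chase against $\phi\wedge\phi=0$.
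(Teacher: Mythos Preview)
Your proof is correct and follows essentially the same route as the paper: apply Proposition~\ref{prop:holp} to get ordinary semistability of $E$, use semistability of $E\otimes\Omega_X^1$ (with slope $\mu(E)$) to bound $\mu(\im\phi)$ from above, then let the exact sequence $0\to\ker\phi\to E\to\im\phi\to 0$ force both slopes to equal $\mu(E)$, and conclude semistability from the equal-slope sub/quotient criterion. The only cosmetic differences are that the paper bounds $\mu(\ker\phi)$ via the ordinary semistability of $E$ rather than the Higgs-semistability of $(E,\phi)$, and that you spell out the induced Higgs field on $\im\phi$ and the verification of $\psi\wedge\psi=0$, which the paper leaves implicit.
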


\begin{proof}
 Let $(E,\phi)$ be a semistable Higgs sheaf. By Proposition \ref{prop:holp}, $E$ is semistable as a sheaf; in particular, also $E\otimes \Omega_X^1$ is semistable, with $\mu(E\otimes \Omega_X^1)=\mu(E)$. As $K:=\ker\phi$ is a subsheaf of $E$, and $Q:=\im\phi$ is a subsheaf of $E\otimes \Omega_X^1$ (note that, in particular, this implies that $Q$ is torsion-free), we have \[\mu(K)\leq \mu(E)\qquad \text{and} \qquad\mu(Q)\leq \mu(E\otimes\Omega_X^1)=\mu(E)\,.\] Finally, by considering the exact sequence
\begin{equation}\label{lasolita}
0\to K \to E \to Q\to 0\,,
\end{equation}
one gets that the only possibility is $\mu(K)=\mu(E)=\mu(Q)$. Being a subsheaf and, respectively, a Higgs quotient with the same slope as $E$, $K$ and $Q$ are both semistable.
\end{proof}

For later purposes, let us suppose that the induced Higgs field on $Q$ vanishes. Under this assumption, one obtains the diagram:
\begin{equation}\label{eq:diag1}
 \xymatrix{&&0\ar[d]\\
         & K\ar@{=}[d]\ar@{=}[r]             & K\ar[d]\ar^-0[r]               & Q\ar@{=}[d]                        &   \\
  0\ar[r]& K\ar[d]^-0\ar[r]                  & E\ar[r]\ar[d]^-\phi            & Q\ar[r]\ar[d]^-0                   & 0 \\
  0\ar[r]& K\otimes\Omega_X^1\ar@{=}[d]\ar[r]& E\otimes\Omega_X^1\ar[r]\ar[d] & Q\otimes\Omega_X^1\ar[r]\ar@{=}[d] & 0 \\
         & K\otimes\Omega_X^1\ar[r]          & C\ar[d]\ar[r]                  & Q\otimes\Omega_X^1\ar[r]           & 0 \\
         &                                   & 0
  }
\end{equation}
where $C$ is defined as the cokernel of $\phi$. By the snake lemma we get the exact sequence
 \begin{equation}\label{eq:seqC}
  \xymatrix{
   0\ar[r]&Q\ar[r]^-\iota&K\otimes\Omega_X^1\ar[r]&C\ar[r]&Q\otimes\Omega^1_X\ar[r]&0\,;
  }
 \end{equation}
notice also that $\mu(K\otimes\Omega_X^1)=\mu(Q\otimes\Omega_X^1)=\mu(E)$. Diagram \eqref{eq:diag1} and eq.~\eqref{eq:seqC} will be repeatedly used throughout the paper.

 \begin{lemma}\label{lemma:rk1tf}
 Consider a rank $1$ torsion-free sheaf $L$, and a locally free sheaf $E$ on   $X$. If $E$ is stable, $L\otimes E$ is stable as well.
\end{lemma}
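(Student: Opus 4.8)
The plan is to reduce the statement to a comparison of slopes between a destabilizing subsheaf of $L\otimes E$ and a corresponding subsheaf of $E$ itself, using the stability of $E$. First I would recall that a rank $1$ torsion-free sheaf $L$ on $X$ can be written as $L\simeq I_Z\otimes \Li$, where $\Li$ is a line bundle and $I_Z$ is the ideal sheaf of a subscheme $Z$ of codimension at least $2$; in particular $L$ and its reflexive hull $L^{\vee\vee}\simeq\Li$ agree outside a locus of codimension $\ge 2$, so $\deg L=\deg\Li$ and tensoring by $L$ changes slopes only by the additive constant $\mu(L)=\deg\Li$. The key point is that tensoring by a line bundle preserves stability, so it suffices to handle the discrepancy coming from the torsion.

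\medskip

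The main argument would go as follows. Suppose for contradiction that $F\subset L\otimes E$ is a saturated subsheaf with $0<\rk F<\rk(L\otimes E)=\rk E$ and $\mu(F)\ge \mu(L\otimes E)$. Consider the sheaf $G:=(F\otimes L^\vee)^{\vee\vee}$ together with the natural map into $(L\otimes E\otimes L^\vee)^{\vee\vee}\simeq E^{\vee\vee}\simeq E$; since $E$ is locally free, this produces a subsheaf $\widetilde F$ of $E$ of the same rank as $F$. The crucial comparison is that, because $F$ and $\widetilde F\otimes L$ differ only along the codimension $\ge 2$ locus where $L$ fails to be locally free (together with the non-reflexive locus), one has the inequality $\deg F\le \deg(\widetilde F\otimes L)=\deg\widetilde F+\rk F\cdot\deg\Li$; the degree can only increase when passing to the reflexive hull and correcting for the ideal sheaf. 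Combining this with $\mu(F)\ge\mu(E)+\deg\Li$ yields $\mu(\widetilde F)\ge\mu(E)$, contradicting the stability of $E$ unless $\rk\widetilde F\in\{0,\rk E\}$, which is excluded by $0<\rk F<\rk E$.

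\medskip

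The hard part will be controlling the degree correction introduced by the torsion of $L$, i.e.\ making precise the inequality $\deg F\le \deg\widetilde F+\rk F\cdot\deg\Li$ and checking that it goes in the right direction. The subtlety is that $F\otimes L^\vee$ need not be a subsheaf of $E$ on the nose; one must saturate or reflexivize, and each such operation can raise the degree, so one has to verify that all these corrections are nonnegative and therefore strengthen (rather than weaken) the contradiction with stability. A clean way to organize this is to work on the open set $U=X\setminus(Z\cup S)$ where both $L$ and the relevant sheaves are locally free, note that on $U$ the isomorphism $L\otimes E\simeq \Li\otimes E$ is genuine and tensoring by the line bundle $\Li$ is an exact slope-shifting equivalence preserving stability, and then argue that extending subsheaves across the codimension $\ge 2$ complement only increases degrees by the usual torsion-free/reflexive estimates. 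Once the degree bookkeeping is settled, the stability of $E$ closes the argument immediately.
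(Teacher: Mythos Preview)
Your proposal is correct and follows essentially the same route as the paper: tensor a would-be destabilizer $F$ by $L^\vee$, push it into $E$, and observe that the discrepancy is supported in codimension $\ge 2$ and therefore does not affect first Chern classes. The only difference is a minor implementation detail that happens to dissolve the ``hard part'' you worry about in your last paragraph: instead of passing to the reflexive hull, the paper simply takes the \emph{image} $G$ of the composition $F\otimes L^\vee \to E\otimes L\otimes L^\vee \to E$ and notes that its kernel $T$ is supported where $L$ fails to be locally free, hence $c_1(T)=0$ and $\mu(G)=\mu(F)+\mu(L^\vee)\ge\mu(E)$ exactly, with no inequalities or saturation bookkeeping needed.
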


\begin{proof}
 Let $F$ be a destabilizing subsheaf for $L\otimes E$; i.e., $F\subset L\otimes E$ and $\mu(F)\geq\mu(L\otimes E)=\mu(L)+\mu(E)$. Consider the composition
 \[
  F\otimes L^\vee\to E\otimes L\otimes L^\vee\to E\,;
 \]
 the support of its kernel $T$ is a closed subset of $X$ of codimension at least $2$, since the map is injective where $L$ is locally free. The quotient $G=F\otimes L^\vee/T$ destabilizes $E$:  by Riemann-Roch Theorem, $c_1(T)=0$, and this implies that $\mu(G)=\mu(F)+\mu(L^\vee)\geq\mu(L)+\mu(E)+\mu(L^\vee)=\mu(E)$.
\end{proof}

We fix now a simply connected  Calabi-Yau manifold $X$ of dimension $n\geq2$; from now on, however, by Calabi-Yau $n$-manifold we shall mean a compact complex manifold with (maximal) $SU(n)$ holonomy. Note that, as a consequence, the  tangent bundle $T_X$ is stable. Note also that Proposition \ref{prop:holp} and Lemma \ref{lemma:kerss} hold true on $X$.

As rank $1$ sheaves are  stable, by Theorem \ref{thm:BisgiudiceTF} there is no nonzero Higgs fields on a rank $1$ Higgs sheaf on $X$. More trivially, since the double dual of a rank $1$ torsion-free sheaf is always locally free (see e.g.~Lemma 1  in \cite{Barth-stable}; the proof works also in the complex analytic case), if $\phi\not= 0$, $\phi^{\vee\vee}$ would provide a nonzero element of $H^0(\Omega^1_X)=0$.

\begin{thm}\label{thm:main2} One has the following vanishing results:
 \begin{enumerate}
  \item Any rank $2$ semistable Higgs sheaf $(E,\phi)$ on $X$ has $\phi= 0$.
  \item If $\dim(X)\geq 3$, any rank $3$ semistable Higgs sheaf $(E,\phi)$ on $X$ has $\phi=0$.
 \end{enumerate}
\end{thm}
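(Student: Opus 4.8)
The plan is to argue by contradiction in both cases, assuming $\phi\neq 0$ and exploiting the \emph{stability} (not merely semistability) of $T_X$ together with the dimension hypothesis. Throughout I write $K=\ker\phi$ and $Q=\im\phi$. By Corollary \ref{cor:gr} the field $\phi$ is nilpotent, so in particular it is not injective and $1\le \rk K\le \rk E-1$; by Lemma \ref{lemma:kerss} both $K$ and $Q$ are semistable of slope $\mu(E)$, sitting in $0\to K\to E\to Q\to 0$. The first reduction I would make is that the induced Higgs field on $Q$ vanishes: when $\rk Q=1$ this is the rank one vanishing recalled just before the statement, and when $\rk Q=2$ it is exactly part (i) applied to the semistable rank two Higgs sheaf $Q$. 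Hence diagram \eqref{eq:diag1} is available, and its snake lemma produces the exact sequence \eqref{eq:seqC}; in particular I obtain an \emph{injection} $\iota\colon Q\hookrightarrow K\otimes\Omega_X^1$ with $\mu(Q)=\mu(K\otimes\Omega_X^1)=\mu(E)$.

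For the rank two statement (i) one has $\rk K=\rk Q=1$. Since $K$ is rank one torsion-free and $\Omega_X^1$ is stable and locally free, Lemma \ref{lemma:rk1tf} shows that $K\otimes\Omega_X^1$ is \emph{stable}, of slope $\mu(E)$. But $\iota$ exhibits the nonzero sheaf $Q$ as a subsheaf of $K\otimes\Omega_X^1$ of the same slope, and as $\rk Q=1<n=\rk(K\otimes\Omega_X^1)$ (here $n\ge 2$) this subsheaf is proper. Stability forces $\mu(Q)<\mu(K\otimes\Omega_X^1)$, contradicting $\mu(Q)=\mu(E)$; so $\phi=0$.

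For the rank three statement (ii), with $n=\dim X\ge 3$, I would split into two cases. If $\rk K=1$ and $\rk Q=2$, the argument is identical to the rank two one: $K\otimes\Omega_X^1$ is stable of slope $\mu(E)$ by Lemma \ref{lemma:rk1tf}, while $\iota$ makes $Q$ a proper nonzero subsheaf of the same slope, now because $\rk Q=2<n$, which is impossible. If instead $\rk K=2$ and $\rk Q=1$, then $K\otimes\Omega_X^1$ is only semistable and a different device is needed: by tensor-hom adjunction the injection $\iota$ corresponds to a nonzero map $\psi\colon Q\otimes T_X\to K$. Here $Q$ is rank one torsion-free and $T_X$ is stable and locally free, so $Q\otimes T_X$ is stable of slope $\mu(E)$ by Lemma \ref{lemma:rk1tf}; since $\rk(Q\otimes T_X)=n>2=\rk K$, the image $\im\psi$ is a proper nonzero quotient of a stable sheaf, whence $\mu(\im\psi)>\mu(E)$. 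But $\im\psi$ is a subsheaf of the semistable sheaf $K$ of slope $\mu(E)$, so $\mu(\im\psi)\le\mu(E)$, a contradiction. Thus $\phi=0$ in every case.

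The step I expect to require the most care is the case $\rk K=2$ of (ii): there $K\otimes\Omega_X^1$ is not stable, so the clean ``subsheaf of a stable bundle'' contradiction is unavailable and one must instead dualize via adjunction and read the contradiction off the quotient $\im\psi$ of the stable sheaf $Q\otimes T_X$. It is precisely here, and in the rank bookkeeping that makes $Q$ (respectively $\im\psi$) a \emph{proper} sub- (respectively quotient-) sheaf, that the hypothesis $\dim X\ge 3$ enters; for $\dim X=2$ the rank count degenerates and the argument breaks, consistently with the fact that rank three Higgs fields do survive on K3 surfaces. A secondary point to check is the logical ordering: part (i) is invoked to annihilate the induced Higgs field on the rank two quotient $Q$ occurring in (ii), so (i) must be proved first.
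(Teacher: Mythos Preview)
Your proof is correct and, for the $\rk K=1$ cases, is essentially the paper's argument: reduce to $\bar\phi=0$ on $Q$, then use that the injection $\iota\colon Q\hookrightarrow K\otimes\Omega^1_X$ exhibits a proper nonzero subsheaf of the same slope inside a stable sheaf. The paper phrases this last step via \cite[Proposition~5.7.11]{Ko} (a nonzero map of the same slope into a stable target is generically surjective, impossible by the rank count), but the content is identical. The paper also disposes of the stable case separately by invoking Theorem~\ref{thm:BisgiudiceTF}, whereas you absorb it into the general argument via the nilpotency from Corollary~\ref{cor:gr}; both are fine.

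The genuine divergence is in the $\rk K=2$ case of (ii). The paper passes to the dual: it considers $(E^\vee,\phi^\vee)$, observes that $\ker\phi^\vee=Q^\vee$ has rank one, builds the analogue of diagram~\eqref{eq:diag1} for the dual (this is diagram~\eqref{eq:diag3}), and reruns the $\rk K=1$ argument verbatim. You instead stay with the original data and use tensor--hom adjunction to convert $\iota$ into a nonzero map $\psi\colon Q\otimes T_X\to K$; stability of the source (Lemma~\ref{lemma:rk1tf}, now applied with $T_X$) together with semistability of $K$ and the rank inequality $n>2$ then forces the contradiction on $\mu(\im\psi)$. Your route is a little more economical---no dual diagram to set up, no check that $\phi^\vee$ really has kernel $Q^\vee$---while the paper's dualization has the conceptual appeal of reducing literally to the already-treated rank-one-kernel case. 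In both approaches the hypothesis $n\ge 3$ enters at exactly the same point, namely the rank comparison that makes the relevant sub/quotient proper.
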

\begin{proof}

In both cases, if $(E,\phi)$ is stable, we can apply Theorem \ref{thm:BisgiudiceTF} to conclude.
Indeed, suppose that $(E,\phi)$ is properly semistable and $\phi\not= 0$. Let $K$ and $Q$ be as in \eqref{lasolita}, and assume $\rk(K)=1$.


Suppose that the Higgs field $\bar{\phi}$ induced by $\phi$ on $Q$ vanishes, so that one can consider the sequence in \eqref{eq:seqC}. By Lemma \ref{lemma:rk1tf}, $K\otimes\Omega^1_X$ is a stable rank $n$ sheaf. Therefore, by \cite[Proposition 5.7.11]{Ko}, the map $\iota$ in \eqref{eq:seqC} is either generically surjective or zero. In the first case, we get a contradiction since $\rk(Q)<\rk(K\otimes\Omega^1_X)=n$ (in both settings (i) and (ii)); but also in the other case we get a contradiction, as $Q$ is not the zero sheaf.

If $\rk(E)=2$, then $\rk(Q)=1$ and $\bar{\phi}=0$ by the previous considerations. This proves (i).

If $\rk(E)=3$, then $\rk(Q)=2$ and the vanishing of $\bar{\phi}$ is given by (i).

It remains to deal with the case in which $E$ has rank $3$ and $K$ has rank $2$. Since $\rk(Q)=1$, $\bar{\phi}=0$. We dualize the sequence \eqref{eq:seqC}; setting $G = E^\vee/Q^\vee$, we obtain the diagram
\begin{equation}\label{eq:diag3}
 \xymatrix{ && 0 \ar[d] \\
         & Q^\vee \ar@{=}[d]\ar@{=}[r]             & Q^\vee \ar[d]\ar^-0[r]               & G\ar@{=}[d]                        &   \\
  0\ar[r]& Q^\vee \ar[d]^-0\ar[r]                  & E^\vee \ar[r]\ar[d]^-{\phi^\vee}            & G\ar[r]\ar[d]^-0                   & 0 \\
  0\ar[r]& Q^\vee \otimes\Omega_X^1\ar@{=}[d]\ar[r]& E^\vee \otimes\Omega_X^1\ar[r]\ar[d] & G \otimes\Omega_X^1\ar[r]\ar@{=}[d] & 0 \\
         & Q^\vee \otimes\Omega_X^1\ar[r]          & D \ar[d]\ar[r]                  & G\otimes\Omega_X^1\ar[r]           & 0 \\
         &                                   & 0 }
\end{equation}
with $\rk(Q^\vee)=1$, and we can conclude reasoning as in the first part of the proof.
\end{proof}

\bigskip
\section{Rank $3$ semistable Higgs bundles with nonzero Higgs fields on K3 surfaces }\label{sec:K3}
For rank 3 semistable Higgs bundles on K3 surfaces an analogue of Theorem \ref{thm:main2} is false, as it is shown in the next example. Throughout this section, $S$ will be an algebraic K3 surface.

\begin{ex}\label{toyexm} Let $E = \Ol_S\oplus \Omega_S^1$ with Higgs field $\phi(f,\omega)=(\omega,0)$ \cite{S88}. This is semistable as an ordinary bundle, hence Higgs-semistable a fortiori, but the Higgs field is nonzero. We have $K = \Ol_S$ and $Q= \Omega_S^1$. Note that $E$ is polystable as a  bundle but not as a Higgs bundle.
\end{ex}

We shall give a classification theorem for rank 3 semistable Higgs bundles on a K3 surface having nonzero Higgs field, distinguishing the case where $\rk K=1$ or $2$ (note that $\rk K \ge 1$ by Corollary \ref{cor:gr}). We shall need the following result \cite[Lemma II.1.16]{OSS}.

\begin{lemma}  \label{locfree} 
On a smooth projective variety, any saturated subsheaf of a reflexive sheaf is reflexive. 
\end{lemma}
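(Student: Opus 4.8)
The plan is to show directly that the canonical map $F\to F^{\vee\vee}$ is an isomorphism, by exhibiting the reflexive hull $F^{\vee\vee}$ as a sheaf squeezed between $F$ and $E$ and then exploiting the saturation hypothesis. Write $G=E/F$, which is torsion-free by assumption, and recall that $E$ reflexive gives $E^{\vee\vee}=E$. The strategy is entirely formal once one knows that dual sheaves are torsion-free, that for a torsion-free sheaf the biduality map is an isomorphism away from a closed set of codimension $\ge 2$ (on a smooth variety, torsion-free sheaves are locally free at all points of codimension $\le 1$), and that a torsion subsheaf of a torsion-free sheaf vanishes.

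Concretely, I would proceed as follows. Since $E$ is reflexive it is torsion-free, hence so is its subsheaf $F$, and therefore the canonical map $F\to F^{\vee\vee}$ is injective with cokernel supported in codimension $\ge 2$. Next, apply $(-)^{\vee\vee}$ to the inclusion $\iota\colon F\hookrightarrow E$ to obtain $\iota^{\vee\vee}\colon F^{\vee\vee}\to E^{\vee\vee}=E$; this map is injective, because $F^{\vee\vee}$ is torsion-free and $\iota^{\vee\vee}$ agrees with the injective $\iota$ on the dense open set where $F$ and $E$ are locally free, so its kernel is a torsion subsheaf of $F^{\vee\vee}$, hence zero. By naturality of the biduality map the triangle commutes, i.e.\ the composite $F\to F^{\vee\vee}\xrightarrow{\iota^{\vee\vee}} E$ is $\iota$ itself; thus we may regard $F\subseteq F^{\vee\vee}\subseteq E$. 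The quotient $F^{\vee\vee}/F$ therefore embeds into $E/F=G$, giving the exact sequence $0\to F^{\vee\vee}/F\to G\to E/F^{\vee\vee}\to 0$. But $F^{\vee\vee}/F$ is supported in codimension $\ge 2$, hence is a torsion sheaf, while $G$ is torsion-free; a torsion subsheaf of a torsion-free sheaf is zero, so $F^{\vee\vee}/F=0$. Consequently $F\cong F^{\vee\vee}$ and $F$ is reflexive.

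The only steps that require genuine care, rather than bookkeeping, are the injectivity of the bidualized inclusion $\iota^{\vee\vee}$ into $E=E^{\vee\vee}$ (where reflexivity of $E$, not merely torsion-freeness, is used in an essential way to land inside $E$) and the codimension-$\ge 2$ support statement for $\coker(F\to F^{\vee\vee})$; both rest on the smoothness of $X$, which guarantees that torsion-free sheaves are free at codimension-one points. As an alternative route, I could instead invoke the characterization of reflexivity for torsion-free sheaves on a smooth variety via Serre's condition $S_2$ (the ``normality'' property that sections extend uniquely across closed subsets of codimension $\ge 2$): one shows that any local section of $F$ defined off such a subset extends to a section of $E$ by normality of $E$, and that the extension lands in $F$ because its image in the torsion-free quotient $G$ vanishes on a dense open set, hence vanishes. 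I would favor the biduality argument above, as it is shorter and makes the role of the saturation hypothesis most transparent.
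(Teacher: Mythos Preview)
Your argument is correct and is the standard proof of this fact. Note that the paper does not supply its own proof of this lemma: it merely states the result and cites \cite[Lemma~II.1.16]{OSS}, where essentially the same biduality argument appears.
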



%
%
%

For rank $3$ Higgs sheaves on K3 surfaces Lemma \ref{lemma:kerss} can be improved.

\begin{lemma}\label{lemma:KQstable}
 Let $(E,\phi)$ be a rank $3$ semistable Higgs sheaf on $S$. Both $K=\ker\phi$ and $Q=\im\phi$ are stable (Higgs) sheaves.
\end{lemma}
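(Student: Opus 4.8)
The plan is to prove that for a rank $3$ semistable Higgs sheaf $(E,\phi)$ on a K3 surface $S$, both $K=\ker\phi$ and $Q=\im\phi$ are \emph{stable}, improving the mere semistability granted by Lemma \ref{lemma:kerss}. By Corollary \ref{cor:gr} we know $\phi\neq 0$ forces $\rk K\geq 1$, and since $\phi\wedge\phi=0$ and $\rk E=3$, the only cases are $\rk K=1$ or $\rk K=2$ (if $\rk K=3$ then $\phi=0$ and the statement about $Q$ is vacuous). In each case exactly one of $K$, $Q$ has rank $1$ and the other has rank $2$. First I would dispose of the rank $1$ factor: a rank $1$ torsion-free sheaf with the same slope as a semistable sheaf is automatically stable, since it has no proper subsheaves of positive rank smaller than its own rank. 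So the content is entirely about showing the \emph{rank $2$} factor is stable, given that it is already semistable with $\mu=\mu(E)$.

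The main work is to upgrade semistability to stability for the rank $2$ piece, call it $F$ (so $F=K$ when $\rk K=2$, and $F=Q$ when $\rk Q=2$). The strategy I would use is to argue by contradiction: suppose $F$ is strictly semistable, so there is a rank $1$ saturated subsheaf $L\subset F$ with $\mu(L)=\mu(F)=\mu(E)$, giving an exact sequence $0\to L\to F\to L'\to 0$ with $L'$ also of slope $\mu(E)$. The plan is to show that such a destabilizing sub/quotient is \emph{Higgs}-invariant, which then contradicts the stability of $(E,\phi)$ as a Higgs sheaf—or, more precisely, contradicts the structure forced on $K$ and $Q$. On a K3 surface $S$ we have the crucial feature $\Omega_S^1=T_S$ canonically (the canonical bundle is trivial), and $\Omega_S^1$ is stable of degree zero with $c_1=0$; this means tensoring by $\Omega_S^1$ preserves slopes and, via Lemma \ref{lemma:rk1tf}, a line bundle tensored into a stable bundle stays stable. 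I would exploit the sequence \eqref{eq:seqC} together with diagram \eqref{eq:diag1}, and the reflexivity granted by Lemma \ref{locfree}, to control how $\phi$ interacts with the putative destabilizing subsheaf.

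Concretely, the key step is to observe that any slope-$\mu(E)$ subsheaf of $E$ on which we can test stability must respect the kernel/image filtration, because $\gr(\phi)=0$ by Corollary \ref{cor:gr} forces $\phi$ to be nilpotent and hence the Jordan--H\"older pieces to be compatible with $K$ and $Q$. I would show that a saturated rank $1$ subsheaf $L\subset K$ (resp.\ $L\subset Q$) of maximal slope would generate a proper Higgs subsheaf of $(E,\phi)$ of slope $\mu(E)$, and then analyze its image under the connecting maps in \eqref{eq:seqC}. Using that $K\otimes\Omega_S^1$ is stable of rank $2$ (on a surface $n=2$, so Lemma \ref{lemma:rk1tf} gives stability when $K$ is a line bundle, and reflexivity plus saturation handle the rank $2$ case via Lemma \ref{locfree}), the map $\iota\colon Q\to K\otimes\Omega_S^1$ must be either zero or generically injective with stable target, and a rank count combined with the slope equalities $\mu(K)=\mu(Q)=\mu(E)$ pins down the possibilities and eliminates the existence of the destabilizing $L$.

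The hard part, I expect, will be handling the \emph{non-locally-free} points: $K$ and $Q$ are only torsion-free, so I cannot directly invoke bundle arguments pointwise, and I must pass to saturations and double duals carefully, invoking Lemma \ref{locfree} to know that saturated subsheaves of the reflexive hulls are reflexive (hence locally free in codimension $1$ on the surface) and that $c_1$ is unaffected by modifications in codimension $2$. A second delicate point is that the induced Higgs field $\bar\phi$ on $Q$ need not vanish in general (unlike in the special case treated just after Lemma \ref{lemma:kerss}), so the clean diagram \eqref{eq:diag1} is not automatically available; I would need to either first establish $\bar\phi=0$ in the rank $3$ situation—plausibly using nilpotency of $\phi$ from Corollary \ref{cor:gr} together with the rank constraints on a $3$-dimensional fiber—or argue stability without passing through \eqref{eq:seqC}. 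Establishing that $\bar\phi=0$, and thereby reducing to the explicit snake-lemma sequence, is where I anticipate the real obstacle, and it is the step I would attack first so that the remaining slope-and-stability bookkeeping becomes routine.
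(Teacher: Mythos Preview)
Your proposal contains the right ingredients but misidentifies where the difficulty lies and, as a result, overcomplicates the argument while leaving a genuine gap in the $\rk K=2$ case.

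First, your ``real obstacle''---showing that the induced Higgs field $\bar\phi$ on $Q$ vanishes---is not an obstacle at all. When $\rk K=1$, the quotient $Q$ has rank $2$ and is semistable by Lemma \ref{lemma:kerss}; Theorem \ref{thm:main2}(i) then gives $\bar\phi=0$ immediately. When $\rk K=2$, $Q$ has rank $1$ and $\bar\phi=0$ is trivial. So diagram \eqref{eq:diag1} and the snake-lemma sequence \eqref{eq:seqC} are available in both cases without further work.

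Second, once \eqref{eq:seqC} is available, the $\rk K=1$ case is a one-line argument, not a contradiction argument: the injection $\iota\colon Q\hookrightarrow K\otimes\Omega_S^1$ exhibits $Q$ as a subsheaf of a \emph{stable} rank $2$ sheaf (Lemma \ref{lemma:rk1tf}) with the \emph{same slope} and the \emph{same rank}; any subsheaf destabilizing $Q$ would destabilize $K\otimes\Omega_S^1$, so $Q$ is stable. Your plan to produce a destabilizing $L\subset F$ that is ``Higgs-invariant, which then contradicts the stability of $(E,\phi)$'' cannot work as stated, since $(E,\phi)$ is only assumed semistable.

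Third, and this is the real gap: your treatment of $\rk K=2$ is incorrect. You write that ``$K\otimes\Omega_S^1$ is stable of rank $2$ \dots\ and reflexivity plus saturation handle the rank $2$ case,'' but when $\rk K=2$ the sheaf $K\otimes\Omega_S^1$ has rank $4$ and is not known to be stable, so the map $\iota$ tells you nothing about $K$. The paper handles this by \emph{dualizing}: one passes to $E^\vee$, obtains from diagram \eqref{eq:diag3} an injection $G\hookrightarrow Q^\vee\otimes\Omega_S^1$ with $Q^\vee$ of rank $1$ (so the target is stable of rank $2$ by Lemma \ref{lemma:rk1tf}), concludes that $G$ is stable, and then uses $G^\vee\simeq K^{\vee\vee}$ to deduce that $K$ is stable. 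This dualization step is missing from your plan.
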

\begin{proof}
 When $K$ is of rank $1$, the stability of $Q$ is given by the inclusion $\iota$ in \eqref{eq:seqC} (since $K\otimes \Omega^1_S$ is stable and the two sheaves have the same slope).

When $K$ has rank $2$,  $Q$ is stable as it has  rank $1$. On the other hand   $K$ is stable as well:  the injection
\begin{equation}\label{eq:Ginj}
 0\to G\to Q^\vee\otimes\Omega_S^1
\end{equation}
coming from diagram \eqref{eq:diag3}
implies that $G$ is stable. Now, dualizing \eqref{lasolita} twice, we have an isomorphism 
$G^\vee \simeq K^{\vee\vee}$, so that $K$ is stable.
\end{proof}

Lemma \ref{lemma:KQstable} shows in particular that for rank $3$ semistable Higgs bundles on K3 surfaces the Jordan-H\"older filtration never has maximal length, as in both cases $\rk K = 1, \, 2$,   a Jordan-H\"older filtration for $(E,\phi)$ is provided by $0\subset K\subset E$.

\subsection{Classification in the $\rk K=1$ case} We take a rank 3 Higgs bundle $(E,\phi)$ on the K3 surface $S$, 
and assume that $K = \ker \phi$ has rank 1, so that, by Lemma \ref{locfree}, it is a line bundle. We have the exact sequence of morphisms of Higgs bundles \eqref{lasolita}.
As $Q$ is stable (Lemma \ref{lemma:KQstable}), one has $Q^\vee\simeq K^\vee\otimes T_S$.  Moreover, by dualizing the sequence \eqref{lasolita} we get
\begin{equation}\label{eq:G} 0 \to Q^\vee \to E^\vee \to G \to 0 \end{equation}
where $G$ has the form $G = \mathcal I_Z(-K)$ for a 0-cycle $Z$ of length
$$ \ell =  - \operatorname{ch}_2(E) -24 + \tfrac32\kappa^2$$
with $\kappa = c_1(K)$.  Dualizing again we obtain
\begin{equation} \label{quot} 0 \to K \to E \to Q^{\vee\vee}  \xrightarrow{q} \mathcal Ext^1(\mathcal I_Z(-K),\Ol_S) \to 0 ,
\end{equation}
and one has
$$ \mathcal Ext^1(\mathcal I_Z(-K),\Ol_S) \simeq \Ol_Z(K).$$

We extract the following data.

\begin{enumerate}
\item The class $\kappa$  in $\operatorname{Pic}(S)$ of the line bundle $K$.
\item The 0-cycle $Z$.
\item A surjective morphism $q\colon \Omega_S^1\to \Ol_Z$.
\item A locally free extension
\begin{equation}\label{extiso} 
0 \to \Ol_S \to F \xrightarrow{f} \ker q \to 0 \,.
\end{equation} 
\end{enumerate}

Note that if we replace $(E,\phi)$ by an isomorphic Higgs bundle  $(E',\phi')$
the data $\kappa$ and $Z$ remain unchanged, while $q$ changes by an automorphism
of $ \Omega_S^1$, and the extension \eqref{extiso} is replaced by a new one which is
isomorphic to the previous one as a complex of sheaves.

Conversely, if such collection of data (the class $\kappa$, the 0-cycle $Z$, a surjective morphism $q\colon \Omega_S^1\to \Ol_Z$ modulo automorphisms
of $ \Omega_S^1$, and an extension as in  \eqref{extiso})
is given, we set $E=F(\kappa)$ and $Q = \ker q \otimes \Ol(\kappa)$, and define a Higgs field $\phi$ for $E$ as the composition 
$$ E \xrightarrow{f\otimes\mbox{\tiny id}} Q \to \Omega_S^1(\kappa) \to E \otimes \Omega^1_S\,.$$

 \subsection{Classification in the $\rk K=2$ case} We consider again the exact sequence \eqref{lasolita} but this time we assume that $\rk K=2$ (an example is provided by the dual to the Higgs bundle in Example \ref{toyexm}). By Lemma \ref{locfree}, $K$ is locally free.   The
 sheaf $Q$, which is torsion-free of rank one, 
is of the form $\mathcal I_Z(\gamma)$, where $\gamma = c_1(Q)$, and
 $Z$ is a suitable 0-cycle. Eventually, note that 
 \begin{equation}\label{identification} K\simeq T_S(\gamma).\end{equation}

 Again, we can extract the following data.
\begin{enumerate}
\item The class $\gamma=c_1(Q)$ in $\operatorname{Pic}(S)$.
\item The 0-cycle $Z$.
\item A locally free extension 
\begin{equation}\label{extiso2} 
0 \to T_S \to F \to \mathcal I_Z  \to 0 \,.
\end{equation}
\item In view of the isomorphism \eqref{identification} the Higgs field of $E$ yields a morphism
$j\colon \mathcal I_Z \to T_S\otimes\Omega^1_S$. \end{enumerate}
If we replace $(E,\phi)$ by an isomorphic Higgs bundle  $(E',\phi')$
the data $\gamma$ and $Z$ remain unchanged, while $j$ changes by an automorphism
of $ T_S$, and the extension \eqref{extiso2} is replaced by a new one which is
isomorphic to the previous one as a complex of sheaves. 
  
 {\it Vice versa,} if such data are given (the class $\gamma$, the 0-cycle $Z$, a morphism $\mathcal I_Z \to T_S\otimes\Omega^1_S$ modulo automorphisms
of $T_S$, and an extension as in  \eqref{extiso2}), it is immediate to define a vector bundle $E$ sitting in a sequence
 $$ 0 \to K \to E \to \mathcal I_Z(\gamma) \to 0 $$
 with a Higgs field $\phi$  given by $j(\kappa)$, so that $K$ is the kernel of $\phi$.
 
 In both cases ($\rk K =1$ or 2), it is easy to check that starting from a Higgs bundle $(E,\phi)$, collecting the data as above, and reconstructing a Higgs bundle,
we obtain the  Higgs bundle $(E,\phi)$ back up to isomorphism.

\bigskip 
\section{Analysis of curve semistability}\label{appltoconj}

Curve semistability, as reminded in the Introduction, is the property for a (Higgs) bundle to be (Higgs) semistable whenever pulled back to smooth projective curves; i.e., a (Higgs) bundle $E$ on smooth projective variety $X$ is said to be curve semistable  if $f^\ast(E)$ is semistable for every $f\colon C\to X$, where $C$ is a smooth projective curve. In this section we prove the nonexistence of rank $3$ curve semistable Higgs bundles with nontrivial Higgs field on a K3 surface with Picard number 1; in addition, after that, we discuss to some extent the case of rank 4 for Calabi-Yau varieties. 
This  in particular proves  the conjecture described in the Introduction in the case of rank 3 Higgs bundles on K3 surfaces with Picard number 1; however, in the next section the conjecture  will be proved for Higgs bundles of any rank on any K3 surface.

Before proving the main result of the section, let us explain what is the underlying idea by means of Example \ref{toyexm}. The key point is that $Q=\Omega_S^1$ is a Higgs quotient of $E$ with vanishing Higgs field: in other words, it can be viewed as an ordinary bundle. Since $\Delta(\Omega_S^1)=24>0$, by   Theorem \ref{TeoBO} we find a morphism $f\colon C\to S$ such that $f^\ast Q$ is not semistable. We denote by $\bar{Q}$ a destabilizing quotient, and we claim that $\bar{Q}$ destabilizes also $f^\ast E$. But this is obvious, since $\mu(f^\ast\Omega_S^1)=\mu(f^\ast E)=0$, and a Higgs quotient of a Higgs quotient is a Higgs quotient as well.

In general we  basically  need to take care of two issues: the fact that $Q$ may be non-locally free, and the twisting. To overcome these problems, in what follows we put the additional hypothesis $\Pic(S)\simeq\Z$ (which is satisfied by the generic algebraic K3 surface).

\begin{prop}\label{prop:cssK3}
Let $(E,\phi)$ be a rank $3$ semistable Higgs bundle $(E,\phi)$ on an algebraic K3 surface $S$, with $\Pic (S) \simeq \Z$. If $\phi\not= 0$, there exists a morphism $f\colon C \to S$, where $C$ is a smooth projective curve, such that $f^{\ast}E$ is not semistable as a Higgs bundle. 
\end{prop}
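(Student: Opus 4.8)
The plan is to reduce the statement to the classification of the previous section together with Theorem~\ref{TeoBO}, following the strategy already illustrated by Example~\ref{toyexm}. The core idea is that since $\phi\neq 0$, the image $Q=\im\phi$ (or a suitable sheaf built from it) carries a \emph{vanishing} induced Higgs field, so it may be regarded as an ordinary bundle, and then one tries to show $\Delta(Q)\neq 0$. If that holds, Theorem~\ref{TeoBO} produces a curve $f\colon C\to S$ on which $f^\ast Q$ is destabilized, and I would then argue that the destabilizing quotient of $f^\ast Q$ also destabilizes $f^\ast E$, because $Q$ is a Higgs quotient of $E$ with all sheaves having the same slope (recall $\mu(K)=\mu(E)=\mu(Q)$ from Lemma~\ref{lemma:kerss}), and a Higgs quotient of a Higgs quotient is again a Higgs quotient.

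\medskip

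First I would split into the two cases $\rk K=1$ and $\rk K=2$ supplied by Corollary~\ref{cor:gr} and Lemma~\ref{lemma:KQstable}. In the $\rk K=1$ case, $Q$ is a rank $2$ stable sheaf of slope $0$; by Lemma~\ref{lemma:KQstable} and the analysis around \eqref{eq:G}--\eqref{quot}, $Q^{\vee\vee}\simeq K^\vee\otimes T_S$ is a stable rank $2$ \emph{bundle} on which the induced Higgs field vanishes, and it is an ordinary Higgs quotient of $E^{\vee\vee}$ up to the zero-dimensional discrepancy recorded by $\Ol_Z(K)$. Here I would compute $\Delta(Q^{\vee\vee})$ using $c_1(T_S)=0$, $c_2(T_S)=24$ and the hypothesis $\Pic(S)\simeq\Z$ to pin down the twist $\kappa$; the point is that on a K3 surface $\Delta(K^\vee\otimes T_S)$ picks up the $c_2(T_S)=24$ contribution and cannot vanish, so Theorem~\ref{TeoBO} applies. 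In the $\rk K=2$ case, by \eqref{identification} one has $K\simeq T_S(\gamma)$, and dualizing \eqref{lasolita} gives a rank $2$ ordinary Higgs quotient $G$ of $E^\vee$ with $G^\vee\simeq K^{\vee\vee}\simeq T_S(\gamma)$; again $\Delta$ of this bundle is governed by $c_2(T_S)=24\neq 0$, so curve semistability fails on the dual and hence on $E$ itself.

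\medskip

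The two issues flagged in the text must be handled with care, and these are where the main obstacle lies. The first is that $Q$ (resp.\ $G$) is in general only torsion-free, not locally free; I would circumvent this by passing to the reflexive hull $Q^{\vee\vee}$, which by Lemma~\ref{locfree} is a genuine bundle, checking that it remains an ordinary (Higgs-field-zero) quotient up to a codimension-$2$ correction that does not affect slopes or the destabilization after pullback, and that the $\Delta$-computation is insensitive to the length-$\ell$ difference between $Q$ and $Q^{\vee\vee}$ in the sense needed to keep $\Delta\neq 0$. The second issue is the twisting by $\kappa$ (resp.\ $\gamma$): since $\Delta$ is invariant under tensoring by a line bundle, $\Delta(Q^{\vee\vee})=\Delta(K^\vee\otimes T_S)=\Delta(T_S)$, so the twist is actually harmless for the discriminant, and the hypothesis $\Pic(S)\simeq\Z$ is what guarantees the quotient sheaves are of the stated form and that slopes behave well under restriction to a curve.

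\medskip

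The hardest step, and the one I would write most carefully, is verifying that a destabilizing quotient of $f^\ast Q^{\vee\vee}$ genuinely descends to a \emph{Higgs} destabilizing quotient of $f^\ast E$ rather than merely an ordinary one: I must confirm that the Higgs field on the relevant quotient vanishes (so that any ordinary quotient is automatically a Higgs quotient), that the composition of the quotient maps $f^\ast E \twoheadrightarrow f^\ast Q^{\vee\vee} \twoheadrightarrow \bar Q$ respects the Higgs structure, and that the slope inequality $\mu(\bar Q)<\mu(f^\ast E)$ survives the reflexive-hull and zero-cycle corrections. Once this compatibility is secured, the proof closes exactly as in the Example~\ref{toyexm} warm-up, and the nonexistence of rank $3$ curve semistable Higgs bundles with $\phi\neq 0$ on $S$ follows.
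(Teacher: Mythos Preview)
Your strategy matches the paper's: split on $\rk K$, replace $Q$ by a locally free sheaf with vanishing Higgs field and $\Delta=24$, invoke Theorem~\ref{TeoBO}, and push the destabilization back to $f^\ast E$. Two points, however, are handled imprecisely in your sketch and are exactly where the paper does real work.

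First, you write $f^\ast E \twoheadrightarrow f^\ast Q^{\vee\vee}$, but this map is \emph{not} surjective: by \eqref{quot} the cokernel of $E\to Q^{\vee\vee}$ is $\Ol_Z(K)$, so on the curve the cokernel is the torsion sheaf $f^\ast R$. Thus your destabilizing quotient $\bar Q$ of $f^\ast Q^{\vee\vee}$ is not automatically a quotient of $f^\ast E$. The paper fixes this by setting $\ker\alpha$ equal to the image of $f^\ast E$ in $f^\ast(K^{\vee\vee}\otimes\Omega^1_S)$ and running a $3\times 3$ diagram: the destabilizing sub $M\subset f^\ast(K^{\vee\vee}\otimes\Omega^1_S)$ is intersected with $\ker\alpha$, and one checks that the resulting quotient $Q_2$ of $\ker\alpha$ (hence of $f^\ast E$) still has $\deg Q_2\le \deg Q_1$ because the discrepancy $C_1$ is torsion on a curve and so has nonnegative degree. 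Your phrase ``does not affect slopes or the destabilization after pullback'' hides precisely this inequality, which must be argued.

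Second, your account of why $\Pic(S)\simeq\Z$ is needed is off. It is \emph{not} used to compute $\Delta$ (you correctly observe $\Delta$ is twist-invariant) nor to ``pin down $\kappa$''. Its sole role is this: Lemma~\ref{lemma:kerss} only gives $c_1(K)\cdot H=\tfrac{1}{3}c_1(E)\cdot H$ for the fixed polarization $H$, whereas the curve $C$ produced by Theorem~\ref{TeoBO} is arbitrary. To conclude $c_1(K)\cdot C=\mu(f^\ast E)$ one needs $c_1(E)$ and $3c_1(K)$ to agree numerically, and $\Pic(S)\simeq\Z$ forces exactly that. Without it the destabilization of $f^\ast Q^{\vee\vee}$ need not translate into a destabilization of $f^\ast E$.
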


\begin{proof}

Let $(E,\phi)$ be a rank $3$ semistable Higgs {bundle} on $S$. We begin by assuming $\rk K =1$. We know that $Q^{\vee\vee}\simeq K \otimes\Omega_S^1$. 
 By dualizing twice the sequence in \eqref{lasolita} we get
\begin{equation}\label{eq:conj1}
 E^{\vee\vee}\to K^{\vee\vee}\otimes\Omega_S^1 \to R \to 0\,,
\end{equation}
where $R$ is of rank $0$. Since $K^{\vee\vee}\otimes\Omega_S^1$ has vanishing Higgs field and  $\Delta(K^{\vee\vee}\otimes\Omega_S^1)=24>0$, by   Theorem \ref{TeoBO} we find a morphism $f\colon C\to S$ such that $f^\ast (K^{\vee\vee}\otimes\Omega_S^1)$ is not semistable; in other words, there exists a subsheaf $M$ of $f^\ast(K^{\vee\vee}\otimes\Omega_S^1)$ such that
\[
 \deg M>\frac{1}{2}\deg f^\ast(K^{\vee\vee}\otimes\Omega_S^1)=c_1(K)\cdot C
\]
(notice that $M$ is clearly also a Higgs subsheaf).

We claim that $f^\ast E$ is not Higgs semistable. Indeed, by pulling back the sequence in \eqref{eq:conj1}, and by recalling that $f^\ast E^{\vee\vee}\simeq (f^\ast E)^{\vee\vee}\simeq f^\ast E$, we get the following diagram:\[
\xymatrix{
&0\ar[d]&0\ar[d]& 0\ar[d] &\\
0\ar[r] &M_2\ar[d]\ar[r]&M\ar[d]\ar[r]&M_1\ar[d]\ar[r] & 0 \\
0\ar[r]&\operatorname{ker}\alpha\ar[r]\ar[d]&f^\ast(K^{\vee\vee}\otimes\Omega_S^1)\ar[r]^-{\alpha}\ar[d]&f^\ast R\ar[r]\ar[d] &0\\
0\ar[r]&Q_2\ar[r]\ar[d]&Q_1\ar[r]\ar[d]&C_1\ar[d]\ar[r]&0\\
&0&0&0&
}
\]
Note that $Q_2$ is a Higgs quotient of $\operatorname{ker}\alpha$, and therefore also a Higgs quotient of $f^\ast E$. Now, since $\rk C_1 = 0$,
\[
\deg Q_2=\deg Q_1-\deg C_1 \le \deg Q_1< c_1(K)\cdot C\,.
\]
The hypothesis on the Picard group $\Pic(S)\simeq \Z$, together with $\mu(E)=\mu(K)$, implies 
$ c_1(K)\cdot C = \mu(f^\ast E)$, so that  $\deg Q_2< \mu(f^\ast E)$, i.e., $Q_2$ destabilizes  $f^\ast E$, as wanted.

We assume now that $\rk K =2$. By dualizing    \eqref{eq:G} we get
\[
 E^\vee\to Q^\vee\otimes\Omega^1_S \to R\to 0\,,
\]
where $R$ is a rank $0$ sheaf and $\mu(E^\vee)=\mu(Q^\vee\otimes\Omega^1_S)$. Reasoning as in the previous case (just noticing that now all the slopes involved change sign) one can find a morphism $f\colon C\to S$ such that $f^\ast(E^\vee)$ is not semistable,
 and so also $f^\ast E$ is not semistable. 
\end{proof}

In what follows we fix a simply connected, smooth Calabi-Yau variety $X$ of dimension $n \ge 2$; $(E,\phi)$ will be a  rank 4 Higgs bundle on $X$ with nonvanishing Higgs field. We shall need the following fact.
\begin{lemma}\label{c2}
If $X$ is a   Calabi-Yau manifold of dimension $n\ge 3$, then $c_2(X) \ne 0$.
\end{lemma}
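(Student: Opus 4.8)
The plan is to argue by contradiction, exploiting the stability of $T_X$ together with the equality case of the Bogomolov inequality. Recall from the conventions fixed above that a Calabi-Yau $n$-manifold carries full $SU(n)$ holonomy, so that $c_1(T_X)=0$ and the tangent bundle $T_X$ is $\omega$-stable. Since $c_1(T_X)=0$, the discriminant simplifies to
\[
\Delta(T_X)=c_2(T_X)-\frac{n-1}{2n}c_1(T_X)^2=c_2(X).
\]
First I would assume, for contradiction, that $c_2(X)=0$ as a class in $H^4(X,\Q)$. Then in particular $\int_X\Delta(T_X)\wedge\omega^{n-2}=0$, i.e. the Bogomolov inequality for the stable bundle $T_X$ is saturated.

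Next I would invoke the equality case of the Bogomolov--L\"ubke inequality: an $\omega$-stable bundle on a compact K\"ahler manifold admits a Hermitian--Einstein connection, and when its integrated discriminant vanishes this connection is \emph{projectively flat} (Kobayashi, L\"ubke). Hence $T_X$ is projectively flat, so the associated $\operatorname{PU}(n)$-bundle $\mathbb{P}(T_X)$ carries a flat structure, classified by a representation $\pi_1(X)\to\operatorname{PU}(n)$. Because $X$ is simply connected this representation is trivial, so $\mathbb{P}(T_X)$ is holomorphically trivial, which forces $T_X\simeq L^{\oplus n}$ for some line bundle $L\in\Pic(X)$. For $n\ge 2$ this is decomposable, contradicting the fact that a stable bundle is simple (hence indecomposable). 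Therefore $c_2(X)\neq 0$.

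The main obstacle is the correct use of the equality case of the Bogomolov--L\"ubke inequality in the K\"ahler (non-projective) setting, and the translation of projective flatness into a contradiction with stability; once projective flatness is in hand, simple connectivity does the rest cleanly. I note that this argument in fact works for all $n\ge 2$ (for $n=2$ one recovers the known value $c_2=24$ directly), so stating the lemma for $n\ge 3$ only reflects where it will be applied. As an alternative route avoiding the Kobayashi--Hitchin machinery, one could use Yau's Ricci-flat metric: Chern--Weil theory gives $\int_X c_2(X)\wedge\omega^{n-2}=\operatorname{const}\cdot\int_X\lvert R\rvert^2\,\omega^n\ge 0$, with equality precisely when $X$ is flat, i.e. a torus; since $X$ is simply connected this cannot happen, whence $\int_X c_2(X)\wedge\omega^{n-2}>0$ and in particular $c_2(X)\neq 0$.
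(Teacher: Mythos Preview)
Your argument is correct. Your \emph{alternative route} at the end is essentially the paper's own proof: using the Ricci-flat (Calabi--Yau) metric, the Chern--Weil representative of $c_2(X)\wedge\omega^{n-2}$ is a positive multiple of $\lvert R\rvert^2\,\omega^n$ (this is Kobayashi, Chapter IV, eqs.~(4.2)--(4.3)), so $c_2(X)=0$ forces the curvature to vanish, contradicting maximal $SU(n)$ holonomy. Your \emph{primary} route, via the equality case of the Bogomolov--L\"ubke inequality for the stable bundle $T_X$ and projective flatness on a simply connected manifold, is a genuinely different (though closely related) packaging: instead of reading off $\lvert R\rvert^2$ directly, you feed the vanishing of the discriminant into the Kobayashi--Hitchin machinery to conclude that the Hermitian--Einstein connection on $T_X$ is projectively flat, and then use $\pi_1(X)=1$ to split $T_X$, contradicting simplicity. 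Note that since $c_1(T_X)=0$ and the Hermitian--Einstein constant is zero, projective flatness here actually upgrades to honest flatness (the trace part is a harmonic $(1,1)$-form in the zero class), so you recover $T_X\simeq\Ol_X^{\oplus n}$ directly; this slightly shortens the last step. Both approaches rest on the same underlying inequality in Kobayashi, but the paper's version is more self-contained (one formula and the holonomy hypothesis), while yours makes explicit the link with stability and the general Bogomolov framework already used elsewhere in the paper.
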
 
\begin{proof} In a Calabi-Yau manifold $X$, the square of the curvature of the Chern connection of the Calabi-Yau metric is proportional to $c_2(X)$ (see eqs.~(IV.4.2,3) in \cite{Ko});  hence, the
condition $c_2(X)=0$ is equivalent to the vanishing of the curvature. But the vanishing of the curvature contradicts the hypothesis that $X$ has maximal holonomy. \end{proof}

\begin{prop} Let $K$ be the kernel of $\phi$ (so that $1\leq \rk(K)\leq 3$).
 \begin{enumerate} 
  \item If $\rk(K)=3$, then $n=3$; in this case, under the further assumption $\Pic(X)\simeq \Z$, $E$ is not curve semistable.
  \item If $\rk(K)=1$, then $n=2$ or $n=3$; in this case, for $n=3$, under the further assumption $\Pic(X)\simeq \Z$, $E$ is not curve semistable.
 \end{enumerate}
\end{prop}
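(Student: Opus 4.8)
The plan is to mirror the structure of the rank~$3$ analysis, treating each rank of $K$ separately and in each case producing a Higgs quotient of $E$ which carries a \emph{vanishing} Higgs field, to which Theorem~\ref{TeoBO} can be applied. First I would settle the numerical constraints on $n$. For part~(i), if $\rk(K)=3$ then $Q=\im\phi$ has rank~$1$, so $\bar\phi=0$ automatically, and the inclusion $\iota\colon Q\hookrightarrow K\otimes\Omega^1_X$ from \eqref{eq:seqC} forces $\rk(K\otimes\Omega^1_X)=3n\geq\rk Q=1$; the genuine constraint comes from comparing $\rk(K)=3$ with $\rk(E)=4$ and $\rk(E\otimes\Omega^1_X)=4n$, together with the requirement that $\iota$ be nonzero into a sheaf of the correct rank, which (as in the proof of Theorem~\ref{thm:main2} via \cite[Proposition 5.7.11]{Ko}) pins down $n=3$. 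For part~(ii), if $\rk(K)=1$ then $Q$ has rank~$3$ and sits in $Q^{\vee\vee}\simeq K\otimes\Omega^1_X$, a sheaf of rank~$n$; torsion-freeness of $Q$ of rank~$3$ inside a rank-$n$ sheaf forces $n\geq 3$, while the dimension hypothesis $n\geq 2$ combined with stability considerations (the kernel/image slope balance of Lemma~\ref{lemma:kerss}) restricts $n$ to $\{2,3\}$.

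Next I would establish non-curve-semistability in the $n=3$, $\Pic(X)\simeq\Z$ cases by adapting the argument of Proposition~\ref{prop:cssK3}. For part~(i) with $\rk(K)=3$, dualizing \eqref{lasolita} twice yields a generically surjective map $E^{\vee\vee}\simeq E \to K^{\vee\vee}\otimes\Omega^1_X \to R\to 0$ with $R$ of rank~$0$, exactly as in \eqref{eq:conj1}. The bundle $K^{\vee\vee}\otimes\Omega^1_X$ has vanishing Higgs field, and its discriminant is controlled by $\Delta(\Omega^1_X)$, which is a positive multiple of $c_2(X)$; by Lemma~\ref{c2} we have $c_2(X)\neq 0$, so $\Delta(K^{\vee\vee}\otimes\Omega^1_X)\cdot H^{\,n-2}\neq 0$. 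Theorem~\ref{TeoBO} then provides $f\colon C\to X$ with $f^*(K^{\vee\vee}\otimes\Omega^1_X)$ unstable; pulling back through the snake-lemma diagram of Proposition~\ref{prop:cssK3} produces a Higgs quotient $Q_2$ of $f^*E$ with $\deg Q_2<\mu(f^*E)$, the final slope comparison using $\Pic(X)\simeq\Z$. For part~(ii) with $\rk(K)=1$ and $n=3$, the same machinery applies to the rank-$n$ bundle $K^{\vee\vee}\otimes\Omega^1_X$ as in the $\rk K=1$ case of Proposition~\ref{prop:cssK3}, again invoking Lemma~\ref{c2} to guarantee $\Delta\neq 0$.

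The main obstacle is the discriminant positivity on a higher-dimensional Calabi-Yau: on a K3 surface one simply uses $\Delta(\Omega^1_S)=24>0$, but for $n=3$ one must instead argue that $\Delta(K^{\vee\vee}\otimes\Omega^1_X)\cdot H^{\,n-2}$ is nonzero, and this is precisely where Lemma~\ref{c2} is needed, since $\Delta(\Omega^1_X)$ is proportional to $c_2(X)$ and the twist by the line bundle $K^{\vee\vee}$ does not affect $\Delta$. A secondary subtlety is the bookkeeping of ranks and torsion: one must verify that the various cokernels $R$, $C_1$ in the pulled-back diagrams are genuinely of rank~$0$ so that the degree inequalities survive, and that $f^*E^{\vee\vee}\simeq f^*E$ (valid since $f$ maps a curve and $E$ is a bundle). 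I expect the numerical determination of $n$ in each case to be routine once the rank of $\iota$'s target is computed, so the weight of the argument rests on the discriminant step and on faithfully transcribing the slope comparison under $\Pic(X)\simeq\Z$.
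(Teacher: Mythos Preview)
Your plan has a genuine gap in part~(i), and the mistake propagates to both the determination of $n$ and the curve-semistability step.

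When $\rk(K)=3$ and $\rk(Q)=1$, the sheaf $K\otimes\Omega^1_X$ has rank $3n$ and there is no reason for it to be stable (Lemma~\ref{lemma:rk1tf} only applies when the twisting sheaf has rank~$1$). Hence the argument ``$\iota$ nonzero into a stable target, so generically surjective by \cite[Proposition 5.7.11]{Ko}'' does not apply to $\iota\colon Q\hookrightarrow K\otimes\Omega^1_X$, and you cannot extract any constraint on $n$ this way. Likewise, your claimed sequence $E\to K^{\vee\vee}\otimes\Omega^1_X\to R\to 0$ with $\rk R=0$ cannot exist: the target has rank $3n=9$ while $E$ has rank~$4$, so no such generically surjective map is available. (The analogy with \eqref{eq:conj1} breaks down because there one had $Q^{\vee\vee}\simeq K\otimes\Omega^1_S$, which relied on $\rk K=1$.)

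The fix --- and this is what the paper does --- is to \emph{dualize} in case~(i), exactly as in the $\rk K=2$ branch of Theorem~\ref{thm:main2}. Passing to diagram~\eqref{eq:diag3} one obtains an injection $G\hookrightarrow Q^\vee\otimes\Omega^1_X$; now $Q^\vee$ has rank~$1$, so $Q^\vee\otimes\Omega^1_X$ is stable of rank~$n$, and the injection of the rank-$3$ sheaf $G$ forces $n=3$. For non-curve-semistability one then composes $E^\vee\to G$ with $G\to Q^\vee\otimes\Omega^1_X$ to get $E^\vee\to Q^\vee\otimes\Omega^1_X\to R\to 0$ with $\rk R=0$, and runs the argument of Proposition~\ref{prop:cssK3} on $E^\vee$ (using $\Delta(Q^\vee\otimes\Omega^1_X)=c_2(X)\neq0$ from Lemma~\ref{c2}); destabilizing $f^\ast E^\vee$ is equivalent to destabilizing $f^\ast E$.

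A smaller issue in part~(ii): the reason the Higgs field on $Q$ vanishes (needed before you can write $Q\hookrightarrow K\otimes\Omega^1_X$) is not ``slope balance'' but Theorem~\ref{thm:main2}(ii), which requires $n\ge 3$. The logic is: assume $n\ge 3$, deduce $\bar\phi=0$, then the injection of rank-$3$ $Q$ into the stable rank-$n$ sheaf $K\otimes\Omega^1_X$ forces $n=3$; hence $n\ge 4$ is impossible. Your curve-semistability outline for~(ii) is correct and matches the paper.
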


 \begin{proof}
   (i) Consider the usual sequence in \eqref{lasolita}: $K$ has vanishing Higgs field by definition, while $Q$ has vanishing Higgs field since it is of rank $1$. Therefore, we get a diagram analogous to \eqref{eq:diag1}, and, by dualizing it, also the one in \eqref{eq:diag3}. The injection $0\to G\to Q^\vee\otimes\Omega_X^1$, since the last term is stable, has to be generically surjective; in particular, we must have $3=\rk(G)=\rk(Q^\vee\otimes\Omega_X^1)=n$.
   
   Now, let assume $\Pic(X) \simeq \Z$. Combining the morphism
   $E^\vee \to G $ as in eq.~\eqref{eq:G} with the morphism $G \to Q^\vee \otimes \Omega^1_X$ contained in a diagram
   analogous to \eqref{eq:diag3}, we obtain a sequence
    $$ E^\vee  \to  Q^\vee \otimes \Omega^1_X \to R \to 0 $$ where $R$ has rank 0 (note that $Q^\vee$ is locally free).     
Now, $\Delta( Q^\vee\otimes \Omega^1_X)=c_2(X)\ne 0$ again
 ensures the existence of a morphism $f\colon C\to X$, where $C$ is a smooth projective curve, such that $f^\ast ( Q^\vee\otimes \Omega^1_X)$ is not semistable. Proceeding as in Proposition \ref{prop:cssK3} one shows that $f^\ast E$ is not semistable.

(ii) We always refer to eq.~\eqref{lasolita}. If we assume $n\geq 3$, we can apply Theorem \ref{thm:main2} to conclude that the Higgs field induced on $Q$ vanishes. We get then the usual diagram in \eqref{eq:diag1}. The injection $0\to Q\to K\otimes\Omega_X^1$, since the last term is stable, has to be generically surjective; in particular, we must have $3=\rk(Q)=\rk(K\otimes\Omega_X^1)=n$.
   
 For $n=3$, assume $\Pic(X) \simeq \Z$; composing the morphism $E\to Q^{\vee\vee}$
 obtained dualizing twice the sequence \eqref{lasolita} with the double dual of the morphism $Q \to K\otimes \Omega^1_X$
 coming  from \eqref{eq:seqC} we obtain the sequence
 $$ E \to  K\otimes \Omega^1_X \to R \to 0 $$ where $R$ has rank 0.    Now one proceeds as in the $\rk K =3$ case.
 \end{proof}

 \begin{rem}
  When $\rk(K)=2$ the main obstruction in getting a classification lies in the fact that in general neither $K\otimes\Omega_X^1$ nor $Q^\vee\otimes\Omega^1_X$ are stable ($K$ and $Q^\vee$ have both rank $2$, so that we cannot apply Lemma \ref{lemma:rk1tf}).
 \end{rem}

 \bigskip
 \section{The conjecture for K3 surfaces}\label{ConjK3}
 As we discussed in the Introduction, for Higgs  bundles a full analogue of Theorem \ref{TeoBO}
 is still conjectural in general. So, we have the following
\begin{conjecture}\label{con}
Any curve semistable Higgs bundle on  a polarized variety $(X,H)$  has vanishing discriminant.
\end{conjecture}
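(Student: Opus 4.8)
The plan is to reduce Conjecture \ref{con} to the already-known ordinary-bundle statement, Theorem \ref{TeoBO}, by exploiting the Jordan--H\"older filtration of a semistable Higgs bundle. First I would record that curve semistability forces ordinary semistability: restricting $(E,\phi)$ along a general complete intersection curve $C\hookrightarrow X$ cut out by members of $|mH|$ keeps $f^\ast E$ Higgs semistable, and by the Higgs analogue of Mehta--Ramanathan this already shows $(E,\phi)$ is Higgs semistable. The Higgs Bogomolov inequality (as in \cite{BHR}) then gives $\Delta(E)\cdot H^{n-2}\geq 0$, and, exactly as in the discussion following Theorem \ref{TeoBO}, Theorem~2 of \cite{S92} upgrades the vanishing $\Delta(E)\cdot H^{n-2}=0$ to $\Delta(E)=0$. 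Thus it suffices to prove the numerical statement $\Delta(E)\cdot H^{n-2}=0$, and I would argue by contradiction, assuming $\Delta(E)\cdot H^{n-2}>0$ and aiming to manufacture a morphism $f\colon C\to X$ for which $f^\ast(E,\phi)$ is not Higgs semistable.

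The engine of the argument is the one already used for K3 surfaces in Proposition \ref{prop:cssK3}: if $N$ is an \emph{ordinary} Higgs sub- or quotient of $(E,\phi)$ (one on which the induced Higgs field vanishes) with $\mu(N)=\mu(E)$ and $\Delta(N)\cdot H^{n-2}>0$, then Theorem \ref{TeoBO} produces $f\colon C\to X$ with $f^\ast N$ not semistable; since $f^\ast N$ carries the zero Higgs field, any destabilizing sub or quotient of $f^\ast N$ is automatically a Higgs sub or Higgs quotient of $f^\ast E$, and it destabilizes $f^\ast E$. (As in Proposition \ref{prop:cssK3} the computation is run on the reflexive hulls, the codimension-$2$ discrepancy being harmless for $\Delta\cdot H^{n-2}$.) To find such an $N$ I would pass to a Jordan--H\"older filtration $0=E_0\subset E_1\subset\dots\subset E_s=E$ of the Higgs bundle. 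By Corollary \ref{cor:gr} the associated graded Higgs field vanishes, so every $\gr_i=E_i/E_{i-1}$ is an ordinary semistable sheaf of slope $\mu(E)$; in particular the bottom piece $E_1=\gr_1$ is an ordinary Higgs subsheaf and the top piece $\gr_s$ is an ordinary Higgs quotient. Intersecting with $H^{n-2}$ and invoking the Hodge index theorem, one has $\Delta(E)\cdot H^{n-2}=\sum_i\Delta(\gr_i)\cdot H^{n-2}+(\text{nonnegative Hodge-index terms})$, with each $\Delta(\gr_i)\cdot H^{n-2}\geq0$.

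When $\Pic(X)\simeq\Z$ this decomposition is clean: every $c_1(\gr_i)$ is a multiple of $H$, the Hodge-index terms vanish, the equality $\mu(\gr_i)=\mu(E)$ survives restriction to any curve, and $\Delta(E)\cdot H^{n-2}>0$ forces $\Delta(\gr_j)\cdot H^{n-2}>0$ for some $j$; if $j=1$ or $j=s$ we win by the mechanism above. The main obstacle is the \emph{intermediate} pieces: a $\gr_j$ with $1<j<s$ is neither a Higgs subsheaf nor a Higgs quotient of $E$, so it cannot be fed directly into Theorem \ref{TeoBO}. The natural remedy is to induct, replacing $(E,\phi)$ by the Higgs quotient $(E/E_1,\bar\phi)$ or the Higgs subsheaf $(E_{s-1},\phi)$ and checking that curve semistability descends --- and this is exactly where the difficulty concentrates. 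On a curve $C$ whose class is not proportional to $H^{n-1}$ one generally has $\mu(f^\ast E_1)\neq\mu(f^\ast E)$, so Higgs semistability of $f^\ast E$ need not pass to the Higgs subquotients, and a destabilization of an intermediate piece need not destabilize $E$ itself. Controlling this slope misalignment --- equivalently, producing the destabilizing curve inside the subcone of curve classes along which the whole filtration stays slope-balanced, and simultaneously disposing of the Hodge-index correction terms that appear once $\Pic(X)$ has rank $>1$ --- is precisely the open part of the conjecture, and is what forces the extra hypotheses ($\Pic(X)\simeq\Z$, nef tangent bundle \cite{BruLoGiu}, or the Calabi--Yau kernel/image structure exploited through Lemma \ref{lemma:kerss}) in all cases where the statement is currently known. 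The theory of Higgs numerically flat bundles of \cite{BG3,BBG} is the natural framework in which I would try to package such a descent uniformly.
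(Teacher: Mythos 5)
The statement you set out to prove is Conjecture~\ref{con}, which is \emph{open} in general: the paper itself establishes it only for K3 surfaces (section~\ref{ConjK3}), and from there for Enriques surfaces and a few related classes via \cite{BruLoGiu}. Your proposal is honest that it does not close the argument --- the descent through intermediate Jordan--H\"older pieces is left as ``the open part'' --- but it also contains a genuine error in the part you do claim. You invoke Corollary~\ref{cor:gr} to conclude that the Jordan--H\"older graded pieces $\gr_i$ of a curve semistable Higgs bundle on an \emph{arbitrary} polarized variety $(X,H)$ carry zero Higgs field, hence are ordinary semistable sheaves. Corollary~\ref{cor:gr} is not available there: it rests on Theorem~\ref{thm:BisgiudiceTF}, the vanishing of Higgs fields for polystable Higgs sheaves on \emph{simply connected Calabi--Yau manifolds}. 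On a general variety (already on a curve of genus at least $2$, or on any variety with many holomorphic one-forms) the $\gr_i$ are stable \emph{Higgs} sheaves whose Higgs fields need not vanish, so they cannot be fed into Theorem~\ref{TeoBO}, and the engine you borrow from Proposition~\ref{prop:cssK3} never starts; even your ``clean'' case ($\Pic(X)\simeq\Z$ with the defective piece at the bottom or top of the filtration) fails outside the Calabi--Yau setting. A smaller but real issue: passing to reflexive hulls is \emph{not} harmless for $\Delta\cdot H^{n-2}$ --- on a surface $\Delta(\mathcal I_Z)=\ell(Z)>0$ while $\Delta(\mathcal I_Z^{\vee\vee})=\Delta(\mathcal O)=0$ --- so positivity of $\Delta(\gr_j)\cdot H^{n-2}$ need not survive double dualization.

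It is worth contrasting your strategy with what the paper actually does where the conjecture \emph{is} proved. For K3 surfaces the paper touches neither Jordan--H\"older filtrations nor Theorem~\ref{TeoBO}: it uses the equivalence from \cite[\S 3.2]{BRcong} between Conjecture~\ref{con} and the statement that all Chern classes of an H-nflat Higgs bundle vanish, and then inducts on the rank along the kernel sequence \eqref{lasolita}. The inputs are: $c_1(K)=0$, obtained from Lemma~\ref{lemma:kerss} together with the Hodge index theorem applied to \emph{all} polarizations at once; H-nflatness of $K$, from Proposition~\ref{results}(iii)--(iv) (its pullback to any curve is semistable of degree zero); H-nflatness of $Q$ as a cokernel, from Proposition~\ref{results}(v); and the rank-$2$ base case, where $\phi=0$ by Theorem~\ref{thm:main2} and \cite{DPS} applies. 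This sidesteps exactly the slope-misalignment problem on which your induction founders: H-nflatness is detected curve by curve and is inherited by kernels and cokernels, so no balancing of slopes along special curve classes is ever required. Your closing instinct --- that the H-nflat theory of \cite{BG3,BBG} is the right framework for a uniform descent --- is precisely what the paper implements, but the object one inducts on is the kernel/cokernel of $\phi$ with Chern class vanishing, not the Jordan--H\"older subquotients with Theorem~\ref{TeoBO}.
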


This conjecture has been proved for varieties with nef tangent bundle in \cite{BruLoGiu}, and some other classes of varieties
obtained from these with some easy geometric constructions.
The set of varieties with nef tangent bundle contains all rational varieties, ruled surfaces and abelian varieties.

The results of the previous sections allow one to trivially prove the conjecture for rank 2 bundles on Calabi-Yau varieties and rank 3 bundles on Calabi-Yau varieties of dimension at least 3 (note that it makes sense to prove the conjecture for bundles of fixed rank). Indeed, in these cases the Higgs field necessarily vanishes (Theorem \ref{thm:main2}), and one is reduced to Theorem \ref{TeoBO}. The conjecture is easily showed to hold true also for rank 3 bundles on K3 surfaces with Picard number 1: it is sufficient to notice that Proposition \ref{prop:cssK3} implies that such a bundle or has vanishing Higgs field, and one is reduced again to Theorem \ref{TeoBO}, or is not curve semistable, and there is nothing to prove.

 In this section we prove Conjecture 1 for K3 surfaces. To this end we shall use some of the results of the previous sections, and  some properties of the so-called H-numerically flat (H-nflat) Higgs bundles. We recall here their basic definitions and   properties we shall need later on. See \cite{BG3,BBG} for full definitions and proofs.
 
 For the moment, let $X$ be any smooth projective variety. If  $E$ is a rank $r$ vector bundle on $X$, and   $s<r$  is a positive
integer, we    denote by $p_s\colon\operatorname{Gr}_s(E) \to X $ the Grassmann bundle parameterizing
rank $s$ locally free quotients of  $E$ of dimension $s$.  There is on $\operatorname{Gr}_s(E)$ an exact sequence of vector bundles 

\begin{equation}\label{univ}
0 \to S_{r-s,E} \,\stackrel{\psi}{\longrightarrow}\, p_s^* E \,
\stackrel{\eta}{\longrightarrow}\, Q_{s,E} \,.
\to 0
\end{equation}
Here  $S_{r-s,E}$ is
the universal  rank $r-s$ subbundle of $p_s^* E$ and 
 $Q_{s,E}$ is the universal   rank $s$ quotient.

Given a Higgs bundle $\Ea\,=\,(E,\phi) $, for every $s$ we define a closed subscheme
${\mathfrak G}r_s(\Ea)\subset\operatorname{Gr}_s(E)$ as the vanishing locus of the 
composite morphism
\begin{equation}\label{lambda}
(\eta\otimes\text{Id})\circ p_s^\ast(\phi) \circ \psi\,\colon\, S_{r-s,E}\to Q_{s,E}\otimes
 p_s^\ast\Omega_X^1\, .
\end{equation}
Let $\rho_s= p_s\vert_{{\mathfrak G}r_s(\Ea)}\colon {\mathfrak G}r_s(\Ea)\to X$ be the
restriction. The restriction of \eqref{univ} to ${\mathfrak G}r_s(\Ea)$ provides the universal
exact sequence
\begin{equation}\label{g1}
0\to \mathfrak S_{r-s,\Ea}\,\stackrel{\psi}{\longrightarrow} \,\rho_s^\ast \Ea\,
\stackrel{\eta}{\longrightarrow} \mathfrak Q_{s,\Ea}\to 0\, ,
\end{equation}
where  $\mathfrak Q_{s,\Ea}=Q_{s,E}\vert_{{\mathfrak G}r_s(\Ea)}$ 
is equipped with the quotient Higgs field induced by the Higgs field
$\rho_s^\ast \phi$. The scheme 
${\mathfrak G}r_s(\Ea)$ satisfies  the universal property   that  a morphism of varieties $f\colon T \rightarrow X$  factors through
${\mathfrak G}r_s(\Ea)$ if and only if the pullback $f^*(E)$ admits a locally free rank $s$ Higgs quotient. In that case the
pullback of the above universal sequence on ${\mathfrak G}r_s(E)$ gives the desired quotient of $f^*(E)$.

\begin{defin}\label{moddef}  
A Higgs bundle $\Ea=(E,\phi)$ of rank one is said to be Higgs-numerically
effective  (H-nef for short) if $E$ is numerically effective in the usual sense. If
$\rk \Ea \geq 2$, we inductively define H-nefness by requiring that
\begin{enumerate}
\item all Higgs bundles ${\mathfrak Q}_{s,\Ea}$ are H-nef   for all $s$, and

\item the determinant line bundle $\det(E)$ is nef.
\end{enumerate}
If both $\Ea$ and $\Ea^\ast$ are Higgs-numerically effective, $\Ea$ is said to
be Higgs-numerically flat (H-nflat).
\end{defin}

From Definition \ref{moddef} one sees that    the first Chern class of an
H-nflat Higgs bundle is numerically equivalent to zero. Note that if
$\Ea\,=\,(E,\,\phi)$, with $E$ nef in the usual sense, then $\Ea$ is H-nef. If
$\phi\,=\,0$, the Higgs bundle $\Ea\,=\,(E,\,0)$ is H-nef if
and only if $E$ is nef in the usual sense. 

We collect in the following Proposition the properties of  H-nef and H-nflat Higgs bundles we shall need later on.
\begin{prop}\label{results}\mbox{}
\begin{enumerate}
\item[(i)] An H-nflat Higgs bundle is semistable (with respect to any polarization).
\item[(ii)]  A curve semistable   Higgs bundle whose first Chern class is numerically equivalent  to zero is H-nflat. In particular,
\item[(iii)] A  semistable Higgs bundle on a curve having zero first Chern class in H-nflat.
\item[(iv)]  A Higgs bundle $\Ea$ on $X$ is H-nef (H-nflat) if and only if for every morphism $f\colon C\to X$, where
$C$ is a smooth projective curve, the Higgs bundle $f^\ast\Ea$ is H-nef (H-nflat). 
  \item[(v)]  The kernel and cokernel of  a morphism of H-nflat Higgs bundles are
H-nflat Higgs bundles.
 \end{enumerate}
\end{prop}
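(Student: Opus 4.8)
The whole proposition is a digest of facts established in the theory of H-nef and H-nflat Higgs bundles developed in \cite{BG3,BBG}, so the plan is to trace each item back to that theory and, wherever possible, to the curve case, rather than to re-derive everything from scratch. The organising principle is statement (iv): since nefness of a line bundle is tested by restriction to curves, and H-nefness is built inductively out of the nefness of the determinants of the universal quotients $\mathfrak{Q}_{s,\Ea}$ on the Grassmann schemes ${\mathfrak G}r_s(\Ea)$, one first checks that the formation of ${\mathfrak G}r_s(\Ea)$, cut out by $(\eta\otimes\text{Id})\circ p_s^\ast(\phi)\circ\psi$, and of its universal Higgs quotient commutes with base change along any $f\colon C\to X$. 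Granting this, an induction on the rank reduces both the H-nef and the H-nflat properties to the corresponding statements for the pullbacks $f^\ast\Ea$ to smooth projective curves; this is (iv), and it is the technical backbone on which the remaining items rest.

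Next I would settle the curve case directly. For a Higgs bundle on a smooth projective curve carrying first Chern class zero, I would show that H-nflatness is equivalent to semistability of slope $0$. In one direction, slope-$0$ semistability forces every Higgs quotient (after saturation, where the degree only increases) to have nonnegative degree, which via the inductive criterion yields nefness of $\Ea$; applying the same to $\Ea^\ast$ gives H-nflatness, and this is precisely (iii). Conversely, on a curve H-nflatness implies that no Higgs quotient can have negative slope, i.e.\ semistability, which is (i) specialised to curves. Statement (i) in general then follows by combining this with (iv): an H-nflat $\Ea$ has numerically trivial first Chern class and restricts to an H-nflat, hence semistable, Higgs bundle on every curve; a Higgs bundle with $c_1$ numerically trivial whose restriction to every curve is semistable must itself be semistable, since a destabilising Higgs subsheaf of positive slope would survive restriction to a suitable complete intersection curve of large degree.

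With the curve case in hand, (ii) is immediate: if $\Ea$ is curve semistable and $c_1(\Ea)$ is numerically equivalent to zero, then for every $f\colon C\to X$ the pullback $f^\ast\Ea$ is semistable with $c_1(f^\ast\Ea)=0$, hence H-nflat by (iii); by the restriction criterion (iv), $\Ea$ is H-nflat (note that (iii) is in turn the special case $X=C$ of (ii), which explains the ``in particular''). Finally, for (v) I would again pass to curves through (iv) and reduce to the statement that the full subcategory of H-nflat Higgs bundles on a fixed smooth projective curve is abelian. By the equivalence just established this subcategory is that of slope-$0$ semistable Higgs bundles, which is closed under kernels and cokernels by the standard slope argument: a morphism of slope-$0$ semistable objects has image of slope $0$, forcing kernel and cokernel to be semistable of slope $0$ with numerically trivial determinant.

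The step I expect to be the main obstacle is the base-change compatibility underlying (iv): one must verify that the vanishing locus ${\mathfrak G}r_s(\Ea)$ and the induced Higgs quotients behave correctly under pullback, so that the inductive definition of H-nefness is genuinely testable on curves. Everything else is then either a slope computation or an appeal to the abelian structure of slope-$0$ semistable objects on a curve; accordingly the substance of the argument, and the reason these statements are merely collected here, lives in \cite{BG3,BBG}.
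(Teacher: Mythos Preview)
The paper offers no proof of this proposition at all: immediately before stating it, the authors write ``See \cite{BG3,BBG} for full definitions and proofs,'' and the proposition is simply a list of citations. Your proposal correctly identifies this, and the outline you give --- organising everything around the curve criterion (iv), reducing (i), (ii), (iii) to the equivalence ``H-nflat $\Leftrightarrow$ semistable of degree $0$'' on curves, and treating (v) via the abelian-category structure of slope-$0$ semistable Higgs bundles on a curve --- is indeed the shape of the arguments in those references. So in spirit your sketch matches the intended content.

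One point in your treatment of (v) deserves a caveat. You reduce to curves via (iv), but (iv) as stated applies to Higgs \emph{bundles}: to invoke it for $\ker f$ and $\coker f$ you must already know these are locally free on $X$, and that is not automatic in dimension $\ge 2$ (the cokernel of a map of bundles can have torsion, and even the kernel need not be locally free). On a curve your saturation argument shows $\im f$ is saturated, hence $\coker f$ is locally free there, but this does not by itself imply local freeness on $X$. The references \cite{BG3,BBG} handle (v) by an argument that establishes local freeness along the way (ultimately via the characterisation of H-nflat bundles through filtrations with hermitian-flat quotients, or equivalently via the Tannakian/fundamental-group-scheme description), rather than by a pure reduction to curves. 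Your sketch is fine as a plan but should flag that local freeness on $X$ is an additional input, not a consequence of the curve case.
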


 
 We recall the following fact \cite[\S 3.2]{BRcong}.
 \begin{prop} Conjecture 1 is equivalent to the following statement: all Chern classes of an H-nflat Higgs bundle vanish.
 \end{prop}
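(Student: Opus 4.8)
The plan is to prove the two implications separately; the implication ``Conjecture~1 $\Rightarrow$ vanishing of all Chern classes'' is the substantial one. Throughout I use that an H-nflat Higgs bundle $\Ea=(E,\phi)$ has $c_1(E)$ numerically trivial (Definition \ref{moddef}), so that $\Delta(E)=c_2(E)$ and, for such bundles, Conjecture~1 is literally the assertion $c_2(E)=0$.

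For the implication ``vanishing $\Rightarrow$ Conjecture~1'', let $(E,\phi)$ be curve semistable of rank $r$. First I would pass to the endomorphism Higgs bundle $\End E=E\otimes E^\vee$, equipped with its natural Higgs field $[\phi,-]$. The point of this device is that $c_1(\End E)=0$ on the nose, which avoids any $\Q$-twist. For every $f\colon C\to X$ one has $f^\ast(\End E)=\End(f^\ast E)$, and since $f^\ast E$ is semistable and the tensor product and dual of semistable Higgs bundles on a curve are again semistable (Simpson), $\End E$ is curve semistable. By Proposition \ref{results}(ii) it is therefore H-nflat, so the hypothesis forces all its Chern classes to vanish, in particular $c_2(\End E)=0$. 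The elementary identity $c_2(\End E)=2r\,\Delta(E)$ (which, since $c_1(\End E)=0$, reads $c_2(\End E)=-\ch_2(\End E)$) then gives $\Delta(E)=0$, which is Conjecture~1.

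For the reverse implication, let $\Ea=(E,\phi)$ be H-nflat. By Proposition \ref{results}(i) it is semistable, and by (iv) it is even curve semistable (its pullback to any curve is again H-nflat, hence semistable). Conjecture~1 then yields $\Delta(E)=0$, i.e. $\ch_2(E)=0$; together with $c_1(E)=0$ this says the first two terms of $\ch(E)$ beyond the rank vanish. I would now replace $\Ea$ by the associated graded $\gr\Ea$ of a Jordan--H\"older filtration: this is polystable, has the same Chern character as $\Ea$, and still satisfies $c_1=0$, $\Delta=0$. Invoking Simpson's nonabelian Hodge correspondence \cite{S92}, a polystable Higgs bundle with $c_1\cdot H^{n-1}=0$ and $\Delta\cdot H^{n-2}=0$ underlies a semisimple flat bundle on the same smooth complex vector bundle; a flat bundle has vanishing curvature, hence vanishing rational Chern classes. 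Therefore $\ch(\gr\Ea)=\rk\Ea$, and since $\ch(\Ea)=\ch(\gr\Ea)$, every Chern class of $\Ea$ vanishes.

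The main obstacle is exactly this last step, and it is worth isolating why the conjecture alone is genuinely insufficient without an analytic input. One is tempted to bootstrap from $\Delta=0$ to the higher Chern classes by applying the conjecture to the H-nflat bundles $\mathbb{S}_\lambda E$ obtained from Schur functors; but for any such functorial construction the class $\ch_2(\mathbb{S}_\lambda E)$ is a degree-two symmetric function of the Chern roots of $E$, hence a combination of $p_2(E)$ and $p_1(E)^2=c_1(E)^2$, and with $c_1=0$ it is merely a multiple of $p_2(E)=2\,\ch_2(E)$. Thus no tensorial manipulation can extract information beyond $\ch_2$, and a purely formal argument is impossible: the passage from ``$\Delta=0$'' to ``all Chern classes vanish'' must import the harmonic-metric/flat-structure content of Simpson's correspondence. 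The forward direction, by contrast, is elementary once the $\End$ trick is in place.
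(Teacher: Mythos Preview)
The paper does not supply a proof of this proposition; it is simply recalled as a fact from \cite[\S 3.2]{BRcong}. Your argument is correct and would serve as a self-contained proof of the equivalence.

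One small technical point in the ``Conjecture~1 $\Rightarrow$ vanishing'' direction: the associated graded $\gr\Ea$ of a Jordan--H\"older filtration of a semistable Higgs bundle is a priori only a polystable torsion-free Higgs \emph{sheaf}, so invoking Simpson's correspondence for polystable Higgs \emph{bundles} tacitly assumes $\gr\Ea$ is locally free. This can be justified (each stable factor inherits the conditions $c_1\cdot H^{n-1}=0$ and $\ch_2\cdot H^{n-2}=0$ via the Bogomolov inequality and Hodge index, and the resulting flat harmonic metric forces local freeness), but it is cleaner to cite the result directly: Theorem~2 of \cite{S92}, already invoked earlier in the paper, says precisely that a semistable Higgs bundle with $\ch_1(E)\cdot H^{n-1}=0$ and $\ch_2(E)\cdot H^{n-2}=0$ has all rational Chern classes zero. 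This absorbs the graded-object step and the local-freeness issue at once. It is a cosmetic streamlining rather than a genuine gap.

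Your forward direction via $\End E$ is correct and efficient: the identity $c_2(\End E)=2r\,\Delta(E)$, together with Proposition~\ref{results}(ii) and the preservation of Higgs semistability under tensor and dual on curves, makes the argument entirely formal. The closing remark on why Schur functors cannot bootstrap past $\ch_2$ is a nice piece of motivation for why the analytic input from Simpson is unavoidable, though it is of course extraneous to the proof proper.
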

 
 \begin{thm} Let $X$ be a K3 surface. If $(E,\phi)$ is an H-nflat Higgs bundle on $X$, then $c_i(E)=0$
 for $i>0$. So, Conjecture 1 is true for  K3 surfaces.
 \end{thm}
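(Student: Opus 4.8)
The plan is to reduce the vanishing of $c_i(E)$ to the rank-one case via the Jordan--H\"older filtration, exploiting that on a K3 surface the graded pieces of an H-nflat Higgs bundle carry no Higgs field. First, since $(E,\phi)$ is H-nflat, it is semistable by Proposition \ref{results}(i), and its first Chern class is numerically equivalent to zero by the very definition of H-nflatness. On a K3 surface $H^1(X,\Ol_X)=0$, so $c_1(E)\equiv 0$ forces $\det E\simeq\Ol_X$ and $c_1(E)=0$; it therefore remains only to prove $c_2(E)=0$. Here the key simplification is Corollary \ref{cor:gr}: the induced Higgs field $\gr(\phi)$ on the Jordan--H\"older graded object $\gr E=\bigoplus_i G_i$ vanishes, so each stable factor $(G_i,0)$ is an \emph{ordinary} stable sheaf of slope zero.

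Next I would argue that each factor $G_i$ is H-nflat. The closure property in Proposition \ref{results}(v) makes the category of H-nflat Higgs bundles abelian, so a Jordan--H\"older filtration of $(E,\phi)$ may be chosen inside this category, with H-nflat Higgs-stable quotients $G_i$. Being stable, each $G_i$ is in particular polystable, so Theorem \ref{thm:BisgiudiceTF} applies and gives $\phi_i=0$; thus $G_i$ is a numerically flat ordinary sheaf with $c_1(G_i)=0$. Its reflexive hull is locally free (by Lemma \ref{locfree}, reflexive sheaves on a surface being locally free), and by the structure theory of numerically flat bundles of Demailly, Peternell and Schneider a numerically flat bundle on a simply connected variety is an iterated extension of trivial line bundles. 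A \emph{stable} such sheaf must then be $\Ol_X$ itself, so $G_i\simeq\Ol_X$ and has trivial total Chern class.

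Finally, with $\gr E\simeq\Ol_X^{\oplus r}$ the total Chern class is multiplicative along the filtration, $c(E)=\prod_i c(G_i)=1$, so $c_i(E)=0$ for all $i>0$; in particular $c_2(E)=0$, completing the case of a K3 surface. The conjecture then follows for K3 surfaces from the reformulation recalled above, by which Conjecture \ref{con} is equivalent to the vanishing of all Chern classes of every H-nflat Higgs bundle. The main obstacle I expect is the second step: justifying that the Jordan--H\"older factors of $(E,\phi)$ are genuinely H-nflat (and handling the possible failure of local freeness of a torsion-free factor before passing to its reflexive hull), since this is precisely what lets one trade the nilpotent Higgs field for the zero Higgs field and invoke Theorem \ref{thm:BisgiudiceTF}; everything else is then formal once the Demailly--Peternell--Schneider description is in force on the simply connected K3.
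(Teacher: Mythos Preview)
Your approach via the Jordan--H\"older filtration differs from the paper's, and the gap you flag is genuine: Proposition~\ref{results}(v) only tells you that kernels and cokernels of morphisms \emph{between} H-nflat Higgs bundles are H-nflat; it does not say that a Higgs-stable subobject $E_1\subset E$ of the same slope is H-nflat, so it does not by itself make the H-nflat category abelian in the way you need. Concretely, to run the curve test for $E_1$ you would want $\deg f^\ast E_1=0$ for every $f\colon C\to X$, i.e.\ $c_1(E_1)=0$; but the Jordan--H\"older filtration depends on the chosen polarization, so a priori you only know $c_1(E_1)\cdot H=0$ for that one $H$. Your local-freeness worry is also real: the quotients $G_i$ are only torsion-free, and passing to the reflexive hull can change $c_2$ by the length of the singularity, so even if $G_i^{\vee\vee}\simeq\Ol_X$ you do not immediately get $c_2(G_i)=0$.

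The paper sidesteps both issues by inducting not on the Jordan--H\"older filtration but on the two-step filtration $0\subset K\subset E$ with $K=\ker\phi$. The point is that $K$ is \emph{intrinsic}: since $(E,\phi)$ is H-nflat it is semistable with respect to every polarization, and Lemma~\ref{lemma:kerss} then gives $c_1(K)\cdot H=0$ for \emph{all} ample $H$; a short Hodge-index/lattice argument on the K3 forces $c_1(K)=0$. Moreover $K$, being a kernel, is saturated and hence locally free on the surface. Now for any curve $f\colon C\to X$ one has an exact sequence $0\to f^\ast K\to f^\ast E\to f^\ast Q\to 0$ with $\deg f^\ast K=0$; as $f^\ast E$ is H-nflat (hence semistable of degree $0$), $f^\ast K$ is semistable of degree $0$, so H-nflat by Proposition~\ref{results}(iii), and then $K$ is H-nflat by (iv). Proposition~\ref{results}(v) now legitimately gives that $Q$ is H-nflat, and induction on rank finishes (the base case $\rk E\le 2$ is handled by Theorem~\ref{thm:main2}). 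Your endgame via DPS on a simply connected surface is a nice alternative to the bare induction, but it only becomes available once the H-nflatness of the pieces is secured; the paper's choice of $K=\ker\phi$ is precisely what makes that step go through.
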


\begin{proof} We prove this result by induction on the rank of $E$. Since $(E,\phi)$ is H-nflat, it  is semistable (Prop.~\ref{results}(i))
and has $c_1(E)=0$. Moreover, it is curve semistable. We also know that $K=\ker\phi$ has rank at least one (Corollary \ref{cor:gr}) and again, it  is locally free by Lemma \ref{locfree}. We claim that $c_1(K)=0$. Indeed, given any polarization $H$ on $X$,
we have $c_1(K)\cdot H = 0 $  by Lemma \ref{lemma:kerss}, so that by the Hodge index theorem $c_1(K)^2<0$.
However, $c_1(K)\cdot H = 0 $ holds for {\em all}  polarizations $H$ in $X$, and since the ample cone is open in
$\Pic(X)\otimes\mathbb R$, we obtain that $c_1(K)$ lies in the trascendental lattice. But of course it also lies in the Picard lattice, and as
$c_1(K)^2 \ne 0$, we obtain $c_1(K)=0$.

Note that  if $K=E$ there is nothing to prove, as then $\phi=0$, $E$ is numerically flat, and its Chern classes vanish \cite{DPS}. So $ 1 \le \rk K < \rk E$.

We claim that $K$ is H-nflat as well. To show that let us consider a morphism
$f\colon C \to X$, where $C$ is a smooth projective curve, and pull the exact sequence \eqref{lasolita} back to $C$. Since $K$ is locally free, we get an exact sequence
$$ 0 \to f^\ast(K) \to f^\ast(E) \to f^\ast(Q) \to 0 .$$ 
Now $\mu(f^\ast(K)) =\mu(f^\ast(E))=0$, so that $f^\ast(K)$ is semistable and hence H-nflat (Prop.~\ref{results}(iii)); so by Prop.~\ref{results}(iv) $K$ is H-nflat as claimed.

By Prop.~\ref{results}(v), $Q$ is H-nflat as well. By induction, $K$ and $Q$ have vanishing Chern classes, so the same is true for $E$.

To start the induction,   note that for $\rk E=2$ the Higgs field $\phi$ vanishes (Theorem \ref{thm:main2}), so again
$E$ is numerically flat.
\end{proof}

Finally, Proposition 3.12 in  \cite{BruLoGiu} implies that the conjecture also holds for Enriques surfaces (and actually, more generally, for smooth projective surfaces $X$ such that there is  a finite \'etale map $Y\to X$ where $Y$ is a K3 surface); analogously, 
by  Proposition 3.13 in  \cite{BruLoGiu}, the conjecture holds for varieties $Y$ for which there is a surjective morphism onto
a K3 surface $Y$ whose fibres are smooth and   rationally connected (for instance, projective bundles on $Y$ and blowups of $Y$ at points).

\bigskip
\frenchspacing

\end{document}